\def\N{{\hbox{\bf N}}}
\newenvironment{proof}{\noindent {\bf Proof} }{\endprf\par}
\def \endprf{\hfill  {\vrule height6pt width6pt depth0pt}\medskip}
\def\emph#1{{\it #1}}
\def\textbf#1{{\bf #1}}
\theoremstyle{plain}
  \newtheorem{theorem}[subsection]{Theorem}
  \newtheorem{lemma}[subsection]{Lemma}
\theoremstyle{remark}
\theoremstyle{definition}
  \newtheorem{definition}[subsection]{Definition}
\begin{document}

\title[The prime ideals contain arbitrary large truncated classes]
{The prime ideals in every class contain arbitrary large
truncated classes}

\author{Chunlei Liu}
\address{Department of Mathematics, Shanghai Jiao Tong University, Shanghai, 200240}
\email{clliu@@sjtu.edu.cn}

\begin{abstract}We prove that the prime ideals in every class contain arbitrary large truncated classes.
\end{abstract}

\maketitle

\section{Introduction}
Green-tao \cite{gt-ap} proved the following epoch-making theorem.
\begin{theorem}[Green-Tao's PAP theorem]The primes contains arbitrary
long arithmetic progressions. \end{theorem}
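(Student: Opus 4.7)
The plan is to follow the Green--Tao strategy: rather than studying the primes directly (which have density zero in $\N$), I would transfer a density-type result (Szemer\'edi's theorem) to a sparse set by enveloping the primes inside a \emph{pseudorandom majorant}. Concretely, for a large $N$ and a target progression length $k$, I would restrict attention to primes in a dyadic window $[\eps N, N]$ and write $f = c \cdot \Lambda \cdot 1_{[\eps N, N]}$ where $\Lambda$ is the von~Mangoldt function normalized so that $f$ has mean bounded below by a positive constant. The goal is to produce a measure $\nu \colon [N] \to \R^{\geq 0}$ with $\nu \geq f$ pointwise, mean close to $1$, and satisfying sufficient pseudorandomness hypotheses so that a \emph{relative Szemer\'edi theorem} forces $f$ to contain $\gg N^2$ arithmetic progressions of length $k$.

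The first main step is to isolate the correct notion of pseudorandomness. Following Goldston--Y\i ld\i r\i m, I would take
\[
\nu(n) = \frac{\log R}{k} \left( \sum_{d \mid n,\, d \leq R} \mu(d) \log(R/d) \right)^2
\]
on integers coprime to a small modulus $W = \prod_{p \leq w} p$ (the ``$W$-trick'' to remove local obstructions from small primes), with $R = N^{\eta}$ for a suitable small $\eta > 0$. I would then verify two conditions on $\nu$: the \textbf{linear forms condition} (correlations of $\nu$ along any bounded collection of affine linear forms in finitely many variables factor into local densities, to leading order) and the \textbf{correlation condition} (a weaker $L^p$-type bound on self-correlations). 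Both reduce, via expansion of the divisor sum, to averages of singular series for systems of linear forms, which can be evaluated by standard contour-shifting with the Riemann zeta function.

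The second main step is the relative Szemer\'edi theorem itself. I would deduce it from the original Szemer\'edi theorem by a decomposition $f = f_1 + f_2$ where $f_1$ is bounded by an absolute constant and $f_2$ is small in a suitable Gowers uniformity norm $U^{k-1}$. The decomposition is produced by a Koopman--von~Neumann / dense model argument: the $\sigma$-algebra generated by a bounded family of Gowers-uniformity almost-periodic functions captures all the $U^{k-1}$ mass of $f$. The pseudorandomness of $\nu$ is used precisely to show that functions dominated by $\nu$ are well-approximated (in $U^{k-1}$) by their conditional expectations onto such algebras, hence by bounded functions. Applying classical Szemer\'edi to $f_1$ and using a generalized von~Neumann theorem to discard $f_2$ yields the claim.

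The hard part, and the one that absorbs most of the technical work, is verifying the linear forms and correlation conditions for $\nu$. The algebraic manipulations are straightforward, but the final asymptotic requires tight control on singular series uniformly over linear systems whose complexity grows with $k$, and on the error terms from truncating the divisor sum at $R = N^{\eta}$. Everything else --- the $W$-trick, the transference argument, and the appeal to Szemer\'edi --- is comparatively soft once these arithmetic estimates are in hand.
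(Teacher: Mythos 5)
Your sketch is a correct high-level account of the Green--Tao argument, which the paper simply cites rather than re-proves; the paper's actual contribution is to carry out the same strategy (a Goldston--Y{\i}ld{\i}r{\i}m pseudorandom majorant with the $W$-trick, verification of linear-forms and correlation conditions via zeta-function estimates, and transference to a relative Szemer\'edi theorem) over the ring of integers of an arbitrary number field. So the approach you describe matches both the cited source and the paper's generalized setting.
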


We shall prove a generalization of
Green-Tao's PAP theorem to number fields.

Let $K$ be any number field. We embed it into its Minkowski space
$$K_{\infty}=\prod_{\sigma\mid\infty}K_{\sigma},$$
 where $K_{\sigma}$ is the completion
of $K$ at the archimedean place $\sigma$. The metric on $K_{\infty}$
is given by the formula
$$\|(x_{\sigma})\|_{\rm Min}^2=\sum_{\sigma\mid\infty}[K_{\sigma}:{\mathbb R}]\cdot\|x_{\sigma}\|^2.$$
So the balls
$$B_r=\{a\in K\mid \|a\|_{\rm Min}<r\},\ r>0$$
form a fundamental system of neighborhoods of $0$.

We view an arithmetic progression as a truncated residue class in
${\mathbb Z}$. The ideal-theoretic generalization of the notion of
residue classes in ${\mathbb Z}$ to number fields is the notion of
equivalence classes of ideals. Let $O_K$ be the ring of integers in
$K$.
\begin{definition}Let ${\frak m},{\frak a}, {\frak b}$ be nonzero ideals of $O_K$. If
there is a nonzero number $\xi\in1+{\frak m}{\frak a}^{-1}$ such
that
$${\frak b}=(\xi){\frak a},$$ then ${\frak b}$ is said to be
equivalent to ${\frak a}$ modulo ${\frak m}$.
\end{definition}
\begin{definition}Let ${\frak m}$ and ${\frak b}$ be nonzero fractional ideals of $O_K$ such that
 ${\frak m}\subseteq{\frak b}$. Let $a\in{\frak b}$ and $r>0$.
We call $$\{\xi\in {\frak b}\mid \xi\equiv a({\rm mod}{\frak m}),
\|\xi-a\|_{\rm Min}<r\}$$ a truncated residue class of ${\frak b}$
modulo ${\frak m}$. We call it a truncated principal residue class
of ${\frak b}$ if ${\frak m}$ is principal.
\end{definition}
\begin{definition}Let ${\frak m},{\frak b}$ be nonzero fractional ideals of $O_K$ such that
 ${\frak m}\subseteq{\frak b}$, and let $A$ be a truncated residue class of ${\frak b}$
modulo ${\frak m}$. We call $$\{\xi{\frak b}^{-1}\mid \xi\in A\}$$ a
truncated generalized class.
\end{definition}
We shall prove the following generalization of Green-tao's PAP
theorem.
\begin{theorem}\label{main}The prime ideals in every class contain arbitrary large
truncated generalized classes.\end{theorem}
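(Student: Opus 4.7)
The plan is to adapt the Green--Tao argument \cite{gt-ap}, in its multidimensional form, to the lattice supplied by the Minkowski embedding of $O_K$. The starting reduction is to a configuration problem for prime elements. Fix a class $\mathcal{C}$ and a target size $N$; represent $\mathcal{C}^{-1}$ by an integral ideal $\mathfrak{a}$, and fix a modulus $\mathfrak{m}$. A principal ideal $\xi\mathfrak{a}^{-1}$ lies in $\mathcal{C}$ and is equivalent to $\xi_0 \mathfrak{a}^{-1}$ modulo $\mathfrak{m}$ precisely when $\xi - \xi_0 \in \mathfrak{m}\mathfrak{a}$. Under the Minkowski embedding $O_K \hookrightarrow K_\infty \cong \mathbb{R}^n$ with $n=[K:\mathbb{Q}]$, a truncated residue class becomes the finite set $(\xi_0 + \mathfrak{m}\mathfrak{a}) \cap B_r(\xi_0)$, a portion of a rank-$n$ lattice inside a Minkowski ball. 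It suffices to find, for every $N$, parameters $\mathfrak{m}, \xi_0, r$ so that this set has size at least $N$ and every one of its elements generates a prime ideal.

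Next I would construct a pseudorandom majorant $\nu$ on $O_K$ supported near elements generating primes in $\mathcal{C}$, modelled on Goldston--Yildirim and the Green--Tao $W$-trick. After pulling out the product of all prime ideals of norm at most a slowly growing parameter $w$, one would set $\nu$ proportional to a squared truncated divisor sum $\bigl(\sum_{\mathfrak{d} \mid (\xi)} \mu(\mathfrak{d})\,\chi(\log(N\mathfrak{d})/\log R)\bigr)^2$ over integral ideals of $O_K$, with a smooth cutoff $\chi$, and then project to the ray class of $\mathcal{C}$ by orthogonality over Hecke characters of conductor dividing $\mathfrak{m}$. The next step is to verify that $\nu$ satisfies the linear forms and correlation conditions required to run a relative multidimensional Szemer\'edi theorem on the affine shape $\mathfrak{m}\mathfrak{a} \cap B_r(0)$; the principal inputs are the prime ideal theorem with Hecke characters, contour manipulation of the Dedekind zeta and its attached $L$-series, and archimedean integrals over Minkowski balls.

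Once $\nu$ is in hand, a relative multidimensional Szemer\'edi theorem --- a pseudorandom version of Furstenberg--Katznelson, in the style of Tao's treatment of Gaussian-prime constellations --- yields inside the primes of $\mathcal{C}$ a translate of any prescribed finite configuration from $\mathfrak{m}\mathfrak{a}$; taking the configuration to be $\mathfrak{m}\mathfrak{a} \cap B_r(0)$ for large $r$ then delivers the required truncated generalized class. The principal obstacle will be the linear forms condition in several variables with several archimedean places: the singular series for the relevant Hecke forms must be computed uniformly in $\mathcal{C}$ and $\mathfrak{m}$, and the archimedean contribution, no longer a simple interval integral, must be estimated without the logarithmic losses that would destroy the pseudorandomness bound.
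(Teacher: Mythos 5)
Your overall skeleton is the same as the paper's: embed via Minkowski, build a Goldston--Y{\i}ld{\i}r{\i}m squared divisor-sum majorant with a $W$-trick, verify cross- and auto-correlation conditions by contour/Euler-product manipulation, and transfer through Tao's relative hypergraph (Gaussian constellation) machinery to get a constellation of size $|O_K\cap B_k|$ inside the dense set. But the one point that is genuinely new compared to Green--Tao and to Tao's Gaussian paper is the ideal class restriction, and that is exactly where your plan has a gap, in two ways. First, your divisor sum is over ${\frak d}\mid(\xi)$, i.e.\ it is the detector for $(\xi)$ itself being prime. If the class ${\mathcal C}$ is not principal and $(\xi)={\wp}{\frak a}$ with ${\rm N}\wp>R$, the sum collapses to $\sum_{{\frak e}\mid{\frak a}}\mu_K({\frak e})\varphi(\log{\rm N}{\frak e}/\log R)$, which tends to $\sum_{{\frak e}\mid{\frak a}}\mu_K({\frak e})=0$ as $R\to\infty$; so your $\nu$ gives essentially zero weight to the very elements you need, and the primes of ${\mathcal C}$ have relative density $o(1)$, not positive density. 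Second, ``projecting $\nu$ to the ray class by orthogonality of Hecke characters'' restricts the \emph{support} of the majorant to a congruence-constrained set, and a support-restricted majorant cannot satisfy the linear-forms condition uniformly in the shifts $b_j$: already for the forms $x$, $y$, $x+y$ the simultaneous conditions modulo ${\frak m}$ are, depending on the $b_j$, either inconsistent or satisfied with density ${\rm N}({\frak m})^{-2}$ rather than ${\rm N}({\frak m})^{-3}$, so after normalization the correlation is $0$ or $\asymp{\rm N}({\frak m})$ instead of $1+o(1)$. The $W$-trick escapes this only because it is a \emph{reparametrization} ($x\mapsto Wx+\alpha$ on the whole lattice), not a restriction of support, and the class condition cannot be absorbed by any congruence reparametrization of elements alone.

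The paper's resolution is to absorb the class into the parametrization rather than impose it by characters: fix the fractional ideal ${\frak b}$ (inverse of an integral representative of the class), parametrize the integral ideals of the class as $(Wx+\alpha){\frak b}^{-1}$ with $x$ running over \emph{all} of ${\frak b}$, and take $\nu_N(x)$ proportional to $\Lambda_{K,R}^2\bigl((Wx+\alpha){\frak b}^{-1}\bigr)$; the admissible $\alpha$ is then found by pigeonhole at the end. With this choice the majorant is defined on the full lattice ${\frak b}$, no Hecke $L$-functions are needed (only $\zeta_K$ enters the Goldston--Y{\i}ld{\i}r{\i}m computations), and the class condition is automatic on the support. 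Note that your own opening reduction ($\xi\in\xi_0+{\frak m}{\frak a}$, primality of $\xi{\frak a}^{-1}$) is already this parametrization; you need to carry it into the definition of the majorant (divisors of $(\xi){\frak a}^{-1}$, not of $(\xi)$) instead of re-imposing the class by character orthogonality. A smaller point: the relative constellation theorem produces dilated translates, not translates, so you should check (as the paper implicitly uses) that a dilate of a truncated residue class is again a truncated class with a different modulus, which is what makes the conclusion ``arbitrarily large truncated generalized classes'' come out.
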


 The proof of Theorem \ref{main} is
a generalization of the arguments of Green-Tao in \cite{gt-ap}. A
positive density version of the above theorem can be proved
similarly.
\section{Pseudo-random measures on inverse systems}
In this section we establish the relationship between two kinds of
measures on inverse systems.

Let ${\frak b}$ a fixed nonzero fractional ideal of $K$. For the
sake of convenience, we take ${\frak b}$ to be the inverse of a
nonzero integral ideal. Let $k$ be a fixed positive integer, and $I$
the set of positive integers which are prime to every nonzero number
in $O_K\cap B_{2k}$. Then $\{{\frak b}/(N{\frak b})\}_{N\in I}\}$ is
an inverse system of finite groups. For each $j\in O_K\cap B_k$, we
write $e_j=(O_K\cap B_k)\setminus\{j\}$. Then $(O_K\cap
B_k,\{e_j\}_{j\in O_K\cap B_k})$ is a hyper-graph. To each
hyper-edge $e_j$, we associate the inverse system $\{({\frak
b}/N{\frak b})^{e_j}\}_{N\in I}$. Thus the system $\{({\frak
b}/N{\frak b})^{e_j}\}_{N\in I,j\in O_K\cap B_k}$ maybe regarded as
an inverse system on the hyper-graph $(O_K\cap B_k,\{e_j\}_{j\in
O_K\cap B_k})$.

For each $j\in O_K\cap B_k$, and for each $N\in I$, let
$\tilde{\nu}_{N,j}$ be a nonnegative function on $({\frak b}/N{\frak
b})^{e_j}$.
\begin{definition}The system
$\{\tilde{\nu}_{N,j}\}_{N\in I,j\in O_K\cap B_k}$ is
called a \emph{pseudo-random} system of measures on the system $\{{\frak b}/(N{\frak b})\}_{N\in I, j\in O_K\cap B_k}$ if the following conditions are satisfied.\begin{enumerate}
                                    \item For all $j\in O_K\cap B_k$, and for all $\Omega_j\subseteq\{0,1\}^{e_j}\setminus\{0\}$, one has
$$ \frac{1}{N^{|e_j|[K:{\mathbb Q}]}}\sum_{x^{(1)} \in ({\frak b}/N{\frak b})^{e_j}}\prod_{\omega \in \Omega_j} \tilde{\nu}_{N,j}(x^{(\omega)})  = O(1),$$
uniformly for all $x^{(0)}\in ({\frak b}/N{\frak b})^{e_j}$
                                    \item Given any choice $\Omega_j\subseteq\{0,1\}^{e_j}$ for each $j\in O_K\cap B_k$, one has
$$
\frac{1}{N^{2|O_K\cap B_k|[K:{\mathbb Q}]}}\sum_{x^{(0)}, x^{(1)} \in({\frak b}/N{\frak b})^{O_K\cap B_k}}\prod_{j\in O_K\cap B_k} \prod_{\omega \in \Omega_j} \tilde{\nu}_{N,j}(x^{(\omega)})  = 1 + o(1),
$$as $ N \to \infty$ in $I$.
                                    \item For all $j\in O_K\cap B_k$, for all $i \in e_j$, for all $\Omega_j\subseteq \{0,1\}^{e_j}$, and for all $M \in{\mathbb N}$, we have
$$
\frac{1}{N^{2(|e_j|-1)[K:{\mathbb Q}]}}\sum_{x^{(0)}, x^{(1)} \in ({\frak b}/N{\frak b})^{e_j \backslash \{i\}} }(\frac{1}{N^{2[K:{\mathbb Q}]}}\sum_{ x_i^{(0)}, x_i^{(1)} \in{\frak b}/N{\frak b}}\prod_{\omega \in \Omega_j} \tilde{\nu}_{N,j}( x^{(\omega)} ))^M= O(1).
$$
                                  \end{enumerate}
\end{definition}
For each positive
integer $N\in I$, let $\tilde{\nu}_N$ be a nonnegative function on ${\frak b}/(N{\frak b})$. \begin{definition}The system
$\{\tilde{\nu}_N\}$ is said to satisfy the $k$-cross-correlation
condition if, given any positive integers $s\leq |O_K\cap B_k|2^{|O_K\cap B_k|},m\leq
2|O_K\cap B_k|$, and given any mutually independent linear forms
$\psi_1,\cdots,\psi_{s}$ in $m$ variables whose coefficients are
numbers in $O_K\cap B_{2k}$, we have
$$\frac{1}{N^{m[K:{\mathbb Q}]}}\sum_{\stackrel{x_i\in {\frak b}/(N{\frak b})}{i=1,\cdots,m}}
\prod_{j=1}^{s}\tilde{\nu}_N(\psi_j(x)+b_j)=1+o(1),\
N\rightarrow\infty$$ uniformly for all $b_1,\cdots,b_{s}\in {\frak
b}/(N{\frak b})$.
\end{definition}
\begin{definition}The system
$\{\tilde{\nu}_N\}$ is said to satisfy the $k$-auto-correlation
condition if, given any positive integers $s\leq |O_K\cap
B_k|2^{|O_K\cap B_k|}$, there exists a system $\{\tilde{\tau}_N\}$
of nonnegative functions  on $\{{\frak b}/(N{\frak b})\}$ which
obeys the moment condition
$$\frac{1}{N^{[K:{\mathbb Q}]}}\sum_{x\in{\frak b}/(N{\frak
b})}\tilde{\tau}^M(x)=O_{M,s}(1),\ \forall M\in{\mathbb N}$$ such
that
$$\frac{1}{N^{[K:{\mathbb Q}]}}
\sum\limits_{x\in {\frak b}/(N{\frak b})}\prod_{i=1}^s
 \tilde{\nu}_N(x +
y_i)\ll\sum_{1\leq i<j\leq
s}\tilde{\tau}(y_i-y_j).$$\end{definition}\begin{definition}The
system $\{\tilde{\nu}_N\}$ is called a $k$-pseudo-random system of
measure on the inverse system $\{{\frak b}/(N{\frak b})\}$ if it
satisfies
 the $k$-cross-correlation condition and the $k$-auto-correlation
condition.\end{definition}From now on we assume that
$$\tilde{\nu}_{N, j}(x) := \tilde{\nu}_N(\sum_{i\in e_j}(i-j)x_i).$$\begin{theorem}\label{inversesys}If
$\{\tilde{\nu}_N\}$ is $k$-pseudo-random, then
$\{\tilde{\nu}_{N,j}\}$ is pseudo-random.
\end{theorem}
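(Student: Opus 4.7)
\emph{Proof plan.} The plan is to verify the three defining properties of a pseudo-random system for $\{\tilde{\nu}_{N,j}\}$ directly, by unfolding the definition $\tilde{\nu}_{N,j}(x^{(\omega)})=\tilde{\nu}_N(\sum_{i\in e_j}(i-j)x_i^{(\omega_i)})$ and reducing each condition to either the $k$-cross-correlation (for (1) and (2)) or the $k$-auto-correlation (for (3)) condition on $\tilde{\nu}_N$. Throughout I will exploit that every non-zero difference $i-j$ with $i,j\in O_K\cap B_k$ lies in $O_K\cap B_{2k}$ and, crucially, is a unit in $O_K/NO_K$ for any $N\in I$.

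For (1), fix $j$ and $x^{(0)}$. Unfolding produces a product $\prod_{\omega\in\Omega_j}\tilde{\nu}_N(\psi_\omega(x^{(1)})+b_\omega)$, where $\psi_\omega(x^{(1)})=\sum_{i\in e_j,\,\omega_i=1}(i-j)x_i^{(1)}$ has coefficients in $O_K\cap B_{2k}$ and $b_\omega$ depends only on $x^{(0)}$. Because each $\omega\in\{0,1\}^{e_j}\setminus\{0\}$ is determined by its support, distinct $\omega$'s yield forms with distinct coordinate supports, hence pairwise non-proportional; with $s=|\Omega_j|\le 2^{|e_j|}-1$ and $m=|e_j|$ both in the permitted ranges, $k$-cross-correlation delivers $1+o(1)=O(1)$ uniformly in $x^{(0)}$. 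For (2), the same unfolding produces a product of $\tilde{\nu}_N(\psi_{j,\omega}(x^{(0)},x^{(1)}))$ over pairs $(j,\omega)$, regarded as forms in the $2|O_K\cap B_k|$ variables $\{x_i^{(0)},x_i^{(1)}\}$. Non-proportionality within a fixed $j$ is as before; for $j\ne j'$ the coefficient of $x_j^{(\omega'_j)}$ in $\psi_{j',\omega'}$ equals $j-j'\ne 0$ while the same coefficient in $\psi_{j,\omega}$ vanishes (since $j\notin e_j$), so the two forms cannot be proportional. The count $s\le|O_K\cap B_k|\,2^{|O_K\cap B_k|}$ stays in range, and $k$-cross-correlation again gives $1+o(1)$.

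For (3), split $\Omega_j=\Omega_j^{(0)}\sqcup\Omega_j^{(1)}$ according to the value of $\omega_i$. The inner average over $x_i^{(0)},x_i^{(1)}$ factors into two independent averages, each of which, after the change of variable $y=(i-j)x_i^{(b)}$ (a bijection on ${\frak b}/N{\frak b}$), takes the shape $\frac{1}{N^{[K:{\mathbb Q}]}}\sum_y\prod_{\omega}\tilde{\nu}_N(y+c_\omega)$ with $c_\omega=\sum_{i'\in e_j\setminus\{i\}}(i'-j)x_{i'}^{(\omega_{i'})}$. The $k$-auto-correlation condition bounds each factor by $\sum_{\omega\ne\omega'}\tilde{\tau}_N(c_\omega-c_{\omega'})$. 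Taking the $M$-th power and applying Jensen together with Cauchy--Schwarz reduces the outer average to finitely many expressions of the form $\frac{1}{N^{2(|e_j|-1)[K:{\mathbb Q}]}}\sum_{x^{(0)},x^{(1)}}\tilde{\tau}_N^{2M}(L(x^{(0)},x^{(1)}))$ with $L=c_\omega-c_{\omega'}$; since each such $L$ has at least one coefficient $\pm(i'-j)$ that is a unit modulo $N$ (choose any $i'$ at which $\omega$ and $\omega'$ disagree), averaging first over the corresponding variable returns $O_M(1)$ by the moment condition on $\tilde{\tau}_N$, and the remaining averages are trivial.

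I expect the main obstacle to be the pairwise non-proportionality check in (2): it is the only place where the specific combinatorics of the hyper-edges $e_j=(O_K\cap B_k)\setminus\{j\}$ enters essentially, and all other steps amount to checking that the parameter counts $s$ and $m$ stay within the ranges permitted by the $k$-cross-correlation and $k$-auto-correlation definitions.
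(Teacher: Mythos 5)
Your proposal is correct and follows essentially the same route as the paper's proof: unfold $\tilde{\nu}_{N,j}$ into $\tilde{\nu}_N$ of explicit linear forms, appeal to the $k$-cross-correlation condition for properties (1) and (2), and for property (3) split by the value of $\omega_i$ via Cauchy--Schwarz, invoke the $k$-auto-correlation bound, and finish with the moment condition on $\tilde{\tau}_N$ after observing that $\psi_\omega-\psi_{\omega'}$ has a coefficient $i'-j$ that is invertible modulo $N$. The only added value over the paper is that you spell out the pairwise non-proportionality of the forms $\psi_{(j,\omega)}$ in step (2), a detail the paper leaves implicit.
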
\begin{proof}First, we show that, for all $j\in O_K\cap B_k$, and for all $\Omega_j\subseteq\{0,1\}^{e_j}\setminus\{0\}$,
$$ \frac{1}{N^{|e_j|[K:{\mathbb Q}]}}\sum_{x^{(1)} \in ({\frak b}/N{\frak b})^{e_j}}\prod_{\omega \in \Omega_j} \tilde{\nu}_{N,j}(x^{(\omega)})  = O(1),$$ uniformly for all $x^{(0)}\in ({\frak b}/N{\frak b})^{e_j}$.
For each $\omega\in\Omega_j$, set
$$\psi_{\omega}(x^{(1)})=\sum_{i\in e_j,\omega_i=1}(i-j)x_i^{(1)},
$$
and $$b_{\omega}=\sum_{i \in e_j,\omega_i=0}(i-j)x_i^{(0)}).$$Then
$$ \frac{1}{N^{|e_j|[K:{\mathbb Q}]}}\sum_{x^{(1)} \in ({\frak b}/N{\frak b})^{e_j}}\prod_{\omega \in \Omega_j} \tilde{\nu}_{N,j}(x^{(\omega)})$$
$$ =\frac{1}{N^{|e_j|[K:{\mathbb Q}]}}\sum_{x^{(1)} \in ({\frak b}/N{\frak b})^{e_j}}\prod_{\omega \in \Omega_j} \tilde{\nu}_{N}(\psi_{\omega}(x^{(1)})+b_{\omega})  = O(1).$$

Secondly, we show that, given any choice $\Omega_j\subseteq\{0,1\}^{e_j}$ for each $j\in O_K\cap B_k$,
$$
\frac{1}{N^{2|O_K\cap B_k|[K:{\mathbb Q}]}}\sum_{x^{(0)}, x^{(1)} \in({\frak b}/N{\frak b})^{O_K\cap B_k}}\prod_{j\in O_K\cap B_k} \prod_{\omega \in \Omega_j} \tilde{\nu}_{N,j}(x^{(\omega)})  = 1 + o(1),
$$as $ N \to \infty$ in $I$.
For each pair $(j,\omega)$ with $j\in O_K\cap B_k$ and $\omega\in\Omega_j$, set
$$  \psi_{(j,\omega)}(x)=\sum_{\stackrel{i\in O_K\cap B_k,\delta=0,1}{\omega_i=\delta}}(i-j)x_i^{(\delta)}.
$$
Then $$
\frac{1}{N^{2|O_K\cap B_k|[K:{\mathbb Q}]}}\sum_{x^{(0)}, x^{(1)} \in({\frak b}/N{\frak b})^{O_K\cap B_k}}\prod_{j\in O_K\cap B_k} \prod_{\omega \in \Omega_j} \tilde{\nu}_{N,j}(x^{(\omega)}) $$$$ =
\frac{1}{N^{2|O_K\cap B_k|[K:{\mathbb Q}]}}\sum_{x^{(0)}, x^{(1)} \in({\frak b}/N{\frak b})^{O_K\cap B_k}}\prod_{j\in O_K\cap B_k} \prod_{\omega \in \Omega_j} \tilde{\nu}_{N}(\psi_{j,\omega}(x))  = 1 + o(1),
$$as $ N \to \infty$ in $I$.

Finally we show that,  for all $j\in O_K\cap B_k$, for all $i \in e_j$, for all $\Omega_j\subseteq \{0,1\}^{e_j}$, and for all $M \in{\mathbb N}$,
$$
\frac{1}{N^{2(|e_j|-1)[K:{\mathbb Q}]}}\sum_{x^{(0)}, x^{(1)} \in ({\frak b}/N{\frak b})^{e_j \backslash \{i\}} }(\frac{1}{N^{2[K:{\mathbb Q}]}}\sum_{ x_i^{(0)}, x_i^{(1)} \in{\frak b}/N{\frak b}}\prod_{\omega \in \Omega_j} \tilde{\nu}_{N,j}( x^{(\omega)} ))^M= O(1).
$$
By Cauchy-Schwartz it suffices to show that, for $a=0,1$, $$
\frac{1}{N^{2(|e_j|-1)[K:{\mathbb Q}]}}\sum_{x^{(0)}, x^{(1)} \in
({\frak b}/N{\frak b})^{e_j \backslash \{i\}} }
(\frac{1}{N^{[K:{\mathbb Q}]}}\sum_{ x_i^{(a)} \in{\frak b}/N{\frak
b}}\prod_{\omega \in \Omega_j,\omega_i=a} \tilde{\nu}_{N,j}(
x^{(\omega)} ))^{2M}= O(1).
$$
For each $\omega\in \Omega_j$ with $\omega_i=a$, set
$$\psi_{\omega}(x)=\sum_{l \in e_j\setminus\{i\}} (l-j)x_l^{(\omega_l)}.$$Then $$
\frac{1}{N^{2(|e_j|-1)[K:{\mathbb Q}]}} \sum_{x^{(0)}, x^{(1)} \in
({\frak b}/N{\frak b})^{e_j \backslash \{i\}} }
(\frac{1}{N^{[K:{\mathbb Q}]}}\sum_{ x_i^{(a)} \in{\frak b}/N{\frak
b}} \prod_{\omega \in \Omega_j,\omega_i=a} \tilde{\nu}_{N,j}(
x^{(\omega)} ))^{2M}$$
$$
\leq\frac{1}{N^{2(|e_j|-1)[K:{\mathbb Q}]}}\sum_{x^{(0)}, x^{(1)}
\in ({\frak b}/N{\frak b})^{e_j \backslash \{i\}}
}\sum_{\stackrel{\omega,\omega'\in\Omega_j}{\omega_i=\omega'_i=a}}\tilde{\tau}^{2M}(\psi_{\omega}(x)-\psi_{\omega'}(x))$$$$
=\sum_{\stackrel{\omega,\omega'\in\Omega_j}{\omega_i=\omega'_i=a}}\frac{1}{N^{[K:{\mathbb
Q}]}}\sum_{x \in {\frak b}/(N{\frak b}) }\tilde{\tau}^{2M}(x)=
O(1).$$
\end{proof}
\section{Pseudo-random measures on nonzero fractional ideals}
In this section we establish the relationship between measures on
inverse systems and measures on nonzero fractional ideals.

Let $A$ be a positive constant. For $N\in I$, let $\nu_N\ll\log^A N$
be a nonnegative function on ${\frak b}$.
\begin{definition}The system $\{\nu_N\}$ is said
to satisfy the $k$-cross-correlation condition if, given any
parallelotope $I$ in $K_{\infty}$, given any positive integers
$s\leq |O_K\cap B_k|2^{|O_K\cap B_k|},m\leq 2|O_K\cap B_k|$, given
any $N\log^{-2sA}N\leq \lambda<N$, and given any mutually
independent linear forms $\psi_1,\cdots,\psi_{s}$ in $m$ variables
whose coefficients are numbers in $O_K\cap B_{2k}$, we have
$$\frac{1}{|{\frak b}\cap(\lambda I)|^m}\sum_{\stackrel{x_i\in {\frak b}\cap(\lambda I)}{i=1,\cdots,m}}\prod_{j=1}^{s}\nu_N(\psi_j(x)+b_j)=1+o(1),\
N\rightarrow\infty$$ uniformly for all numbers $b_1,\cdots,b_{s}\in
{\frak b}$.
\end{definition}
\begin{definition}The system $\{\nu_N\}$
is said to satisfy the $k$-auto-correlation condition if given any
positive integers $s\leq |O_K\cap B_k|2^{|O_K\cap B_k|}$, there
exists a system $\{\tau_N\}$ of nonnegative functions  on ${\frak
b}$ such that, given any parallelotope $I$ in $K_{\infty}$,
$$\frac{1}{|(NI)\cap{\frak b}|}\sum_{x\in (NI)\cap{\frak b}}\tau_N^M(x)=O_{M}(1),\ \forall M\in{\mathbb N}$$
and
$$\frac{1}{|(NI)\cap{\frak b}|}
\sum\limits_{x\in (NI)\cap{\frak b}}\prod_{i=1}^s
 \nu_N(x +
y_i)\leq\sum_{1\leq i<j\leq
s}\tau_N(y_i-y_j).$$\end{definition}\begin{definition}The system
$\{\nu_N\}$ is $k$-pseudo-random if it satisfies
 the $k$-cross-correlation condition and the $k$-auto-correlation
condition.
\end{definition}
Let $\eta_1,\cdots,\eta_n$ be a ${\mathbb Z}$-basis of ${\frak b}$,
and set $$G=\sum_{j=1}^n(-1/2,1/2]\eta_i\subseteq K_{\infty}.$$ Let
$\varepsilon>0$ be a sufficiently small constant depending only on
$k$ and ${\frak b}$. From on on we assume that
$$\tilde{\nu}_{N}(x)=\left\{
                    \begin{array}{ll}\nu_N(\hat{x}), & \hbox{} x=\hat{x}+N{\frak b},\hat{x}\in\varepsilon NG,\\
                      1, & \hbox{otherwise.}
                    \end{array}
                  \right.
$$We now prove the following.
\begin{theorem}\label{pseudorandomrelation}If the system $\{\nu_N\}_{N\in I}$ is $k$-pseudo-random,
then the system $\{\tilde{\nu}_N\}_{N\in I}$ is also
$k$-pseudo-random.\end{theorem}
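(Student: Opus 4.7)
The plan is to verify both the $k$-cross-correlation and the $k$-auto-correlation conditions for $\{\tilde\nu_N\}_{N\in I}$ by reducing each, via the canonical lifting to the fundamental parallelotope $NG$, to the corresponding condition already assumed for $\{\nu_N\}$. Since $G$ is a fundamental domain for ${\frak b}$ inside $K_\infty$, every class $y\in{\frak b}/N{\frak b}$ has a unique representative $\hat y\in NG\cap{\frak b}$ and $|{\frak b}\cap NG|=N^{[K:{\mathbb Q}]}$. The defining formula can be rewritten as
$$\tilde\nu_N(y)=1+\mu_N(y),\qquad \mu_N(y):=\mathbf{1}_{\varepsilon NG}(\hat y)\bigl(\nu_N(\hat y)-1\bigr),$$
so that $\mu_N$ is supported on the image of $\varepsilon NG$.

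\textbf{Cross-correlation.} Lift each variable $x_i$ and each shift $b_j$ to representatives $\hat x_i,\hat b_j\in NG\cap{\frak b}$, and expand
$$\prod_{j=1}^{s}\tilde\nu_N(\psi_j(x)+b_j)=\sum_{S\subseteq\{1,\dots,s\}}\prod_{j\in S}\mu_N\bigl(\psi_j(x)+b_j\bigr).$$
The $S=\emptyset$ term contributes the desired main term $1$, so the task reduces to showing
$$T(S):=\frac{1}{N^{m[K:{\mathbb Q}]}}\sum_{\hat x}\prod_{j\in S}\mu_N\bigl(\psi_j(\hat x)+\hat b_j\bigr)=o(1)$$
for each nonempty $S$. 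Since the coefficients of $\psi_j$ lie in $O_K\cap B_{2k}$, the element $\psi_j(\hat x)+\hat b_j$ has Minkowski norm $O_k(N)$ and hence falls into one of a bounded number of translates $\xi_jN+NG$ with $\xi_j\in{\frak b}$. Stratifying by the tuple $\vec\xi=(\xi_j)_{j\in S}$ and invoking the inclusion-exclusion identity $\prod_{j\in S}(\nu_N-1)=\sum_{S'\subseteq S}(-1)^{|S\setminus S'|}\prod_{j\in S'}\nu_N$ converts $T(S)$ into a signed sum of $\nu_N$-cross-correlations over the polyhedral region
$$U_{\vec\xi,S}=\bigl\{\hat x\in(NG\cap{\frak b})^m\colon\psi_j(\hat x)+\hat b_j-\xi_jN\in\varepsilon NG\text{ for all }j\in S\bigr\}.$$
Approximating $U_{\vec\xi,S}$ from inside and outside by $O_k(1)$ parallelotopes of sidelength $\gtrsim\varepsilon N\gg N\log^{-2sA}N$, the $k$-cross-correlation hypothesis on $\nu_N$ delivers $1+o(1)$ on each piece; the identity $\sum_{S'\subseteq S}(-1)^{|S\setminus S'|}=0$ then reduces everything to the $o(1)$ errors.

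\textbf{Auto-correlation.} Define $\tilde\tau_N$ from $\tau_N$ by the same lifting: $\tilde\tau_N(y)=\tau_N(\hat y)$ on the image of $\varepsilon NG$ and $\tilde\tau_N(y)=1$ elsewhere; the moment bound for $\tilde\tau_N$ follows directly from the corresponding bound on $\tau_N$. The same $1+\mu_N$ expansion applied to $\prod_{i=1}^s\tilde\nu_N(x+y_i)$, followed by the wrap-around stratification used above and the pointwise auto-correlation of $\nu_N$ over the parallelotope $NG$, yields the required bound $\frac{1}{N^{[K:{\mathbb Q}]}}\sum_x\prod_i\tilde\nu_N(x+y_i)\ll\sum_{i<j}\tilde\tau_N(y_i-y_j)$, since the differences $y_i-y_j$ are translation-invariant.

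\textbf{Main obstacle.} The technical crux is the passage from the polyhedral regions $U_{\vec\xi,S}$, which are only intersections of affine preimages of parallelotopes, to controlled unions of genuine parallelotopes of the sizes required by the cross-correlation hypothesis on $\nu_N$. One must confirm that the boundary strata of this approximation are absorbed into the $o(1)$ error and that the mutual independence of the forms $\{\psi_j\}_{j\in S'}$ is preserved upon restriction to any subset $S'\subseteq S$---both points being essentially bookkeeping, because $s$ and the diameter of the stratification depend only on $k$.
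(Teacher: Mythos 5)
The auto-correlation half of your argument is essentially the same as the paper's and is sound in outline. The cross-correlation half, however, contains a genuine gap precisely at the spot you flag as the "main obstacle" and then wave off as bookkeeping.

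The issue is your claim that the region $U_{\vec\xi,S}$ can be approximated from inside and outside by $O_k(1)$ parallelotopes of sidelength $\gtrsim\varepsilon N$ with the discrepancy absorbed into the $o(1)$ error. That cannot work: with a bounded number of parallelotopes whose sidelengths are comparable to $N$ (recall $\varepsilon$ is a \emph{fixed} small constant, not a quantity tending to $0$), the symmetric difference between the inner and outer approximations of a genuine polytope $U_{\vec\xi,S}$ has volume a positive fraction of $N^{m[K:\mathbb{Q}]}$, not $o(N^{m[K:\mathbb{Q}]})$. Moreover, the $k$-cross-correlation hypothesis is formulated for product domains of the form $({\frak b}\cap\lambda I)^m$ (after a translation that is absorbed into the $b_j$'s), whereas $U_{\vec\xi,S}$, being an intersection of slabs $\psi_j^{-1}(\text{parallelotope})$, is in general \emph{not} a product; so applying the hypothesis "on each piece" is not legitimate unless each piece is itself a product region. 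The paper resolves both difficulties simultaneously by tiling $(NG)^m$ with translates of the product cell $(\tfrac{N}{Q}G)^m$, where $Q=\log^{2A}N$ grows with $N$: the number of cells $Q^{m[K:\mathbb{Q}]}$ tends to infinity, each cell is a product of sidelength exactly at the lower threshold $\lambda\geq N\log^{-2sA}N$ allowed by the hypothesis, the cells whose $\psi$-image misses $\varepsilon NG^{S}$ contribute $0$ (since $\tilde\nu_N-1$ vanishes there), the cells whose image is contained in $\varepsilon NG^{S}$ contribute $o(1)$ after the alternating-sign cancellation you describe, and the boundary cells form an $O(1/Q)$ fraction, each contributing at most $Q^{-m[K:\mathbb{Q}]}\log^{sA}N$, giving a total $O(\log^{sA}N/Q)=o(1)$. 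Your outline has the right cancellation mechanism (inclusion-exclusion plus $\sum_{S'\subseteq S}(-1)^{|S\setminus S'|}=0$), but without introducing this growing mesh scale the crucial boundary estimate is missing, and that estimate is not bookkeeping---it is exactly why the hypothesis on $\{\nu_N\}$ is stated with the lower bound $\lambda\geq N\log^{-2sA}N$.

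Two further small points. You should check that the cells used are indeed of the form $(x_0+\lambda I)^m$ and that translating the domain is legitimately converted into shifting the $b_j$'s, using the uniformity in $b_j$; the paper does this implicitly. And in the auto-correlation step you should be careful that after stratifying by the wrap-around vector $\beta$, the shifted differences $y_i-N\beta_i-(y_j-N\beta_j)$ land in $NG$ (the paper explicitly reduces to this case) so that the pointwise bound $\tilde\tau_N(y_i-y_j)=\tau_N(\cdot)$ applies.
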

\begin{proof}
First we show that, given any positive integers $s\leq |O_K\cap B_k|2^{|O_K\cap B_k|},m\leq
2|O_K\cap B_k|$, and given any mutually independent linear forms
$\psi_1,\cdots,\psi_{s}$ in $m$ variables whose coefficients are
numbers in $O_K\cap B_{2k}$,
$$\frac{1}{N^{m[K:{\mathbb Q}]}}\sum_{\stackrel{x_i\in {\frak b}/(N{\frak b})}{i=1,\cdots,m}}
\prod_{j=1}^{s}\tilde{\nu}_N(\psi_j(x)+b_j)=1+o(1),\
N\rightarrow\infty$$ uniformly for all $b_1,\cdots,b_{s}\in {\frak
b}/(N{\frak b})$.
It suffices to show that for any $S'\subset \{1,\cdots,s\}$,
$$\frac{1}{N^{m[K:{\mathbb Q}]}}\sum_{\stackrel{x_i\in NG}{i=1,\cdots,m}}
\prod_{j\in S'}(\tilde{\nu}_N(\psi_j(x)+b_j)-1)=o(1),\
N\rightarrow\infty$$ uniformly for all $b_1,\cdots,b_{s}\in {\frak
b}$. Regard $\psi$ as an ${\mathbb R}$-linear map from
$K_{\infty}^m$ to $K_{\infty}^s$. There is a positive constant $c$
such that for any $x\in K_{\infty}^s$, the number of translations of
$G^s$ by vectors in $x+{\frak b}^s$ needed to cover $\psi(G^m)$ is
$\leq c$. Hence the number of translations of $NG^s$ by vectors in
$-b+N{\frak b}^s$ needed to cover $\psi(NG^m)$ is $O(1)$. Therefore it
suffices to show that, for any $\beta\in{\frak b}^S$,
$$ \frac{1}{N^{mn}}\sum\limits_{ \stackrel{x \in
(NG\cap {\frak b})^m}{\psi(x)\in -b+N\beta+NG^S}}\prod_{s \in S'}
(\tilde{\nu}_N(\psi_j(x)+b_j)-1)= o(1).$$ Let $Q=\log^{2A}N$. We
analyze the contributions to the left-hand side from the translates
of $(\frac{N}{Q}G)^m$ by vectors in $(NG\cap{\frak b})^m$. The
translations whose images under $\psi$ do not intersect with
$-b+N\beta+\varepsilon NG^S$ apparently make no contribution. The
total contributions from translations whose images under $\psi$ are
contained in $-b+N\beta+\varepsilon NG^S$ is equal to $$
\frac{1}{N^{mn}}\sum\limits_{ \psi(x_0+(\frac{N}{Q}G)^m)\subseteq
-b+N\beta+\varepsilon NG^S}\sum\limits_{x \in
x_0+(\frac{N}{Q}G)^m}\prod_{s \in S'}
(\nu_N(\psi_j(x)+b_j-N\beta)-1),$$ which is $o(1)$ by the
pseudo-randomness of $\{\nu_N\}$. It remains to consider the
contribution from translations whose images under $\psi$ intersect
with the boundary of $-b+N\beta+\varepsilon NG^S$. The total number
of such translations is bounded by $O(Q^{mn-1})$. As each such a
translation contributes at most $Q^{-mn}\log^{sA} N$. The total
contribution given by such translations is bounded by
$O(\frac{\log^{sA} N}{Q})$.

Secondly we show that,
 given any positive integers $s\leq |O_K\cap
B_k|2^{|O_K\cap B_k|}$,
$$\frac{1}{N^{[K:{\mathbb Q}]}}
\sum\limits_{x\in {\frak b}/(N{\frak b})}\prod_{i=1}^s
 \tilde{\nu}_N(x +
y_i)\ll\sum_{1\leq i<j\leq s}\tilde{\tau}(y_i-y_j),$$ where
$$\tilde{\tau}(x)=\tau(x),\ x\in NG.$$Set
$$g_N(x)=\left\{
                    \begin{array}{ll}
                      \nu_N(\hat{x}), & \hbox{} x=\hat{x}+N{\frak b},\hat{x}\in\varepsilon NG,\\
                      0, & \hbox{otherwise.}
                    \end{array}
                  \right.$$
Then $$ \frac{1}{N^{[K:{\mathbb Q}]}} \sum\limits_{ x \in {\frak
b}/(N{\frak b})}\prod_{i=1}^{s} \tilde{\nu}_N(x + y_i)$$
$$\leq \frac{1}{N^{[K:{\mathbb Q}]}} \sum\limits_{ x \in {\frak
b}/(N{\frak b})}\prod_{i=1}^{s}(1+g_N(x + y_i))$$$$\leq
\sum_{S'\subset \{1,\cdots,s\}}\frac{1}{N^{[K:{\mathbb Q}]}}
\sum\limits_{ x \in {\frak b}/(N{\frak b})}\prod_{i\in S'}g_N(x +
y_i).$$ So we are reduced to showing, for any $S'\subseteq
\{1,\cdots,s\}$, that $$\frac{1}{N^{[K:{\mathbb Q}]}} \sum\limits_{
x \in NG\cap{\frak b}}\prod_{i\in S'}g_N(x + y_i)\ll\sum_{i\neq j\in
S'}\tilde{\tau}(y_i-y_j), \forall y_i\in NG.$$ It suffices to show
that, for any $\beta\in{\frak b}^{S'}$, $$\frac{1}{N^{[K:{\mathbb
Q}]}} \sum\limits_{ x+y_i-N\beta_i \in NG\cap{\frak b},\forall i\in
S'}\prod_{i\in S'}g_N(x + y_i)\ll\sum_{i\neq j\in
S'}\tilde{\tau}(y_i-y_j), \forall y_i\in NG.$$ We
have
$$\frac{1}{N^{[K:{\mathbb Q}]}} \sum\limits_{ x+y_i-N\beta_i \in
NG\cap{\frak b},\forall i\in S'}\prod_{i\in S'}g_N(x +
y_i)$$$$=\frac{1}{N^{[K:{\mathbb Q}]}} \sum\limits_{ x+y_i-N\beta_i
\in \varepsilon NG\cap{\frak b},\forall i\in S'}\prod_{i\in
S'}\nu_N(x + y_i).$$ We may assume that
$y_i-N\beta_i-y_j+N\beta_j\in NG$. Then
$$\frac{1}{N^{[K:{\mathbb Q}]}} \sum\limits_{ x+y_i-N\beta_i \in
NG\cap{\frak b},\forall i\in S'}\prod_{i\in S'}g_N(x +
y_i)\ll\sum_{i\neq j\in S'}\tau(y_i-y_j)=\sum_{i\neq j\in
S'}\tilde{\tau}(y_i-y_j).$$The theorem is proved.
\end{proof}

\section{The relative Szemer\'edi theorem for number fields}
In this section we prove the relative Szemer\'edi theorem for number fields.

For each $N\in I$, let $\tilde{A}_N$ be a subset of
${\frak b}/(N{\frak b})$. \begin{definition}The upper density of $\{\tilde{A}_N\}$
relative to $\{\tilde{\nu}_N\}$ is defined to be
$$\limsup_{I\ni N\rightarrow\infty}
\frac{\sum_{x\in \tilde{A}_N}\tilde{\nu}_N(x)}{\sum_{x\in {\frak
b}/(N{\frak b})}\tilde{\nu}_N(x)}.$$\end{definition}
The following
version of the relative Szemer\'edi theorem for number fields
follows from a theorem of Tao in \cite{tao:gaussian}.
\begin{theorem} If the system $\{\tilde{\nu}_{N,j}\}$ is
pseudo-random, and $\{\tilde{A}_N\}$ has positive upper density
relative to $\{\tilde{\nu}_N\}$, then there is a subset
$\tilde{A}_N$ and a truncated  residue class of ${\frak b}$ of size
$|O_K\cap B_k|$ such that $$A({\rm mod} N{\frak b})\subseteq
\tilde{A}_N.$$
\end{theorem}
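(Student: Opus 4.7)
The plan is to deduce this theorem from Tao's relative hypergraph Szemer\'edi theorem of \cite{tao:gaussian}, fed with the hypergraph $(O_K\cap B_k,\{e_j\}_{j\in O_K\cap B_k})$ together with the pseudo-random system $\{\tilde{\nu}_{N,j}\}$, whose pseudo-randomness is the content of Theorem \ref{inversesys}. The only nontrivial point is the translation between hypergraph simplices and truncated residue classes.

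First I would introduce, for each $j\in O_K\cap B_k$, the nonnegative function $f_{N,j}$ on $({\frak b}/N{\frak b})^{e_j}$ defined by
$$f_{N,j}(x)=1_{\tilde{A}_N}\Big(\sum_{i\in e_j}(i-j)x_i\Big)\cdot\tilde{\nu}_N\Big(\sum_{i\in e_j}(i-j)x_i\Big),$$
so that pointwise $f_{N,j}\leq \tilde{\nu}_{N,j}$. The positive upper density hypothesis on $\{\tilde{A}_N\}$ relative to $\{\tilde{\nu}_N\}$ translates, for infinitely many $N\in I$, into a uniform lower bound on the averages of the $f_{N,j}$'s in exactly the form required by the relative hypergraph Szemer\'edi machinery.

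The output of that machinery is a simplex count
$$\frac{1}{N^{|O_K\cap B_k|[K:{\mathbb Q}]}}\sum_{x\in({\frak b}/N{\frak b})^{O_K\cap B_k}}\prod_{j\in O_K\cap B_k}f_{N,j}(x|_{e_j})\gg 1,$$
so in particular some $x=(x_i)_{i\in O_K\cap B_k}$ exists with $f_{N,j}(x|_{e_j})>0$ for every $j$. Setting $S_1=\sum_i x_i$ and $S_2=\sum_i ix_i$ in ${\frak b}/(N{\frak b})$, a direct expansion gives $\sum_{i\in e_j}(i-j)x_i=S_2-jS_1$, so $y_j:=S_2-jS_1\in\tilde{A}_N$ for every $j\in O_K\cap B_k$. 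Since $y_j-y_{j'}=(j'-j)S_1$, the $y_j$'s share a common residue modulo the principal ideal $(S_1)$ and lie inside a ball of radius $O_k(\|S_1\|_{\rm Min})$; hence they form a single truncated principal residue class $A$ of ${\frak b}$ of cardinality $|O_K\cap B_k|$, with $A\pmod{N{\frak b}}\subseteq\tilde{A}_N$.

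The main obstacle will be matching hypotheses with \cite{tao:gaussian}: one must verify that our notion of pseudo-random system on the inverse hypergraph system coincides with Tao's, and that his counting lemma is robust enough to absorb the contribution from degenerate $x$ (those with $S_1=0$, which would collapse the supposed class to a single point). The constraint $S_1=0$ cuts the effective dimension by one, so degenerate configurations contribute at most $O(N^{(|O_K\cap B_k|-1)[K:{\mathbb Q}]}\log^{O(1)} N)$, which is negligible against the main simplex count and may simply be discarded, leaving a genuine truncated class as required.
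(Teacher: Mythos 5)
Your plan is the same route as the paper's: the paper's entire proof of this statement is a one-line citation to Tao \cite{tao:gaussian}, and you have correctly identified the intended application of the relative hypergraph removal theorem, with the hypergraph $(O_K\cap B_k,\{e_j\})$ and the weight system $\{\tilde\nu_{N,j}\}$ whose pseudo-randomness is Theorem \ref{inversesys}. The algebraic reduction $\sum_{i\in e_j}(i-j)x_i=S_2-jS_1$ (with $S_1=\sum_i x_i$, $S_2=\sum_i ix_i$) is right, and your treatment of the degenerate fibre $S_1=0$ as a lower-dimensional contribution, negligible against the main simplex count, is the standard fix.

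The one genuine gap is the final ``hence they form a single truncated principal residue class.'' By the paper's definition, a truncated residue class of $\frak b$ modulo $(\hat S_1)$ centred at $a$ is $\{\xi\in\frak b:\ \xi\equiv a\ ({\rm mod}\ (\hat S_1)),\ \|\xi-a\|_{\rm Min}<r\}$, i.e.\ $a+\{j\hat S_1: j\in O_K,\ \|j\hat S_1\|_{\rm Min}<r\}$. The weighted form $\|j\hat S_1\|_{\rm Min}^2=\sum_{\sigma}[K_{\sigma}:{\mathbb R}]\,|\sigma(j)|^2|\sigma(\hat S_1)|^2$ is proportional to $\|j\|_{\rm Min}^2$ only when all the $|\sigma(\hat S_1)|$ coincide; once $K$ has several infinite places, the lattice points inside the $\hat S_1$-weighted ellipsoid need not match $O_K\cap B_k$ for any radius $r$. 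So your set $\{\hat S_2-j\hat S_1: j\in O_K\cap B_k\}$ is a constellation of shape $O_K\cap B_k$, but not in general a truncated residue class in the paper's sense. This mismatch is inherited from the paper itself: Tao's theorem produces constellations, and the paper's terse citation does not say how to upgrade a constellation to a residue class. To close it one would have to either restrict the common difference $\hat S_1$ (e.g.\ to rational integers, where the conjugate absolute values agree and $r=k|\hat S_1|$ works), or replace ``truncated residue class'' by ``constellation of shape $O_K\cap B_k$'' in the statement and in the downstream use in Theorem \ref{rs}.
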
The above theorem, along with Theorem \ref{inversesys}, implies the following.
\begin{theorem}\label{rsinversesys} If the system $\{\tilde{\nu}_{N}\}$ is
$k$-pseudo-random, and $\{\tilde{A}_N\}$ has positive upper density
relative to $\{\tilde{\nu}_N\}$, then there is a subset
$\tilde{A}_N$ and a truncated  residue class of ${\frak b}$ of size
$|O_K\cap B_k|$ such that
$$A({\rm mod} N{\frak b})\subseteq \tilde{A}_N.$$
\end{theorem}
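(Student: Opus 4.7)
The plan is to derive this statement by concatenating Theorem \ref{inversesys} with the immediately preceding theorem (the Tao-type relative Szemer\'edi theorem on hypergraph-indexed inverse systems). Recall that the convention $\tilde{\nu}_{N,j}(x) := \tilde{\nu}_N(\sum_{i\in e_j}(i-j)x_i)$ has been imposed globally from the paragraph preceding Theorem \ref{inversesys}, so the derived hypergraph-indexed system $\{\tilde{\nu}_{N,j}\}_{N \in I,\, j \in O_K \cap B_k}$ is canonically attached to $\{\tilde{\nu}_N\}$, and the two theorems share the same underlying density hypothesis on $\{\tilde{A}_N\}$.

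Concretely, I would proceed in two short steps. First, invoke Theorem \ref{inversesys}: the $k$-pseudo-randomness of $\{\tilde{\nu}_N\}$ (the hypothesis of the present theorem) directly yields pseudo-randomness of the associated system $\{\tilde{\nu}_{N,j}\}$ on the hyper-graph $(O_K \cap B_k, \{e_j\}_{j \in O_K \cap B_k})$. Second, feed this into the preceding theorem, noting that the positive upper density of $\{\tilde{A}_N\}$ relative to $\{\tilde{\nu}_N\}$ is exactly the remaining hypothesis required; its conclusion produces the truncated residue class $A$ of ${\frak b}$ of size $|O_K \cap B_k|$ with $A \pmod{N{\frak b}} \subseteq \tilde{A}_N$, which is precisely the conclusion sought here.

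I do not expect any real obstacle: both the density hypothesis and the output — a truncated residue class in ${\frak b}$ of cardinality $|O_K \cap B_k|$ — are formulated intrinsically and are not affected by the passage from the simple system $\{\tilde{\nu}_N\}$ to the hyper-graph system $\{\tilde{\nu}_{N,j}\}$. All the genuine analytic work has already been packaged into Theorem \ref{inversesys} (which translates the correlation conditions) and into the cited theorem of Tao (which supplies the hypergraph relative Szemer\'edi statement). Thus the proof reduces to a one-line citation of these two results in succession.
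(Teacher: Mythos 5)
Your proposal matches the paper exactly: the paper itself states ``The above theorem, along with Theorem \ref{inversesys}, implies the following,'' which is precisely the two-step citation you describe. No gap.
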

\begin{definition}For $N\in I$,
let $A_N$ be a subset of ${\frak b}\cap B_N$. The upper density of
$\{A_N\}$ relative to $\{\nu_r\}$ is defined to be
$$\limsup_{N\rightarrow\infty}\frac{\sum_{x\in A_N}\nu_N(g)}{\sum_{x\in {\frak b}\cap B_N}\nu_N(x)}.$$\end{definition}
We now prove the following.
\begin{theorem}\label{rs}If $\{\nu_N\}$ is
$k$-pseudo-random, and $\{A_{N}\cap B_{\varepsilon N}\}$ has
positive upper density relative to $\{\nu_N\}$, then there is a
subset $A_N$ that contains a truncated principal residue class of
${\frak b}$ of size $|O_K\cap B_k|$.
\end{theorem}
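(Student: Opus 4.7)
The strategy is to reduce the problem modulo $N{\frak b}$, invoke Theorem \ref{rsinversesys}, and then lift the resulting truncated residue class back to ${\frak b}$ using the fact that reduction modulo $N{\frak b}$ is injective on sufficiently small balls around the origin.

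For each $N\in I$, let $\tilde{A}_N\subseteq {\frak b}/(N{\frak b})$ be the image of $A_N\cap B_{\varepsilon N}$ under reduction modulo $N{\frak b}$. By choosing $\varepsilon$ small enough (depending only on $k$ and ${\frak b}$, as the construction of $\tilde{\nu}_N$ already allows), the set $B_{\varepsilon N}\cap{\frak b}$ injects into ${\frak b}/(N{\frak b})$: any two distinct points of it differ by a nonzero element of Minkowski norm less than $2\varepsilon N$, which is shorter than the shortest nonzero vector of $N{\frak b}$. The same smallness of $\varepsilon$ forces each such element to be the natural representative in $NG$ of its residue class and to lie inside $\varepsilon NG$, so $\tilde{\nu}_N(x+N{\frak b})=\nu_N(x)$ for every $x\in A_N\cap B_{\varepsilon N}$. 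I would then verify that $\{\tilde{A}_N\}$ has positive upper density relative to $\{\tilde{\nu}_N\}$: the numerator $\sum_{\tilde{x}\in\tilde{A}_N}\tilde{\nu}_N(\tilde{x})$ equals $\sum_{x\in A_N\cap B_{\varepsilon N}}\nu_N(x)$ by the previous remark, while the denominator $\sum_{\tilde{x}\in{\frak b}/(N{\frak b})}\tilde{\nu}_N(\tilde{x})$ is of order $N^{[K:{\mathbb Q}]}$ by the $k$-cross-correlation condition applied with $s=m=1$, matching the order of $\sum_{x\in{\frak b}\cap B_N}\nu_N(x)$ that appears in the denominator of the hypothesis.

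With this in hand, Theorem \ref{pseudorandomrelation} guarantees that $\{\tilde{\nu}_N\}$ is $k$-pseudo-random, so Theorem \ref{rsinversesys} applies and yields, along a subsequence of $N\in I$, a truncated residue class $A\subseteq {\frak b}$ of cardinality $|O_K\cap B_k|$ with $A({\rm mod}\,N{\frak b})\subseteq\tilde{A}_N$. The configuration produced by the underlying theorem of Tao takes the explicit form $\{a+jr:j\in O_K\cap B_k\}$ for some $a\in{\frak b}$ and some nonzero $r\in{\frak b}$, so its modulus $(r)$ is automatically principal and its diameter is at most a constant multiple of $|O_K\cap B_k|\cdot\|r\|_{\rm Min}$.

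Finally I would lift $A$ into $A_N$. After translating $A$ by a suitable element of $N{\frak b}$, the whole configuration can be arranged to lie inside $B_{\varepsilon N}$; since each residue in $\tilde{A}_N$ has a unique representative in $B_{\varepsilon N}$, namely the one lying in $A_N\cap B_{\varepsilon N}$, each $\alpha\in A$ coincides with its corresponding element of $A_N$, and hence $A\subseteq A_N$. The main obstacle is this last step: the relative Szemer\'edi theorem delivers a configuration only up to $N{\frak b}$, so one must verify both that the modulus governing it is principal and that its diameter is small enough for it to be normalised inside $B_{\varepsilon N}$, so that the reduction map identifies $A$ with a subset of $A_N$ rather than merely of $\tilde{A}_N$. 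This in turn depends on a careful geometric choice of $\varepsilon$ relating the Minkowski ball $B_{\varepsilon N}$ to the parallelotope $\varepsilon NG$ used in the definition of $\tilde{\nu}_N$.
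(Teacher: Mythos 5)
Your proposal follows essentially the same route as the paper's proof: reduce $A_N\cap B_{\varepsilon N}$ modulo $N{\frak b}$, compare the two relative-density normalizations to show $\{\tilde A_N\}$ inherits positive upper density relative to $\{\tilde\nu_N\}$, apply Theorem \ref{rsinversesys} (via Theorem \ref{pseudorandomrelation}), and lift the configuration back into ${\frak b}$. The only notable difference is that you make explicit, and flag as the delicate point, the final lifting step --- injectivity of the reduction on $B_{\varepsilon N}$, principality of the modulus $(r)$ supplied by Tao's constellation theorem, and the translation of $A$ by an element of $N{\frak b}$ into $B_{\varepsilon N}$ --- whereas the paper compresses all of this into the single sentence ``As $A$ is bounded, and $\varepsilon$ is sufficiently small, we conclude that $A\subseteq A_N\cap B_{\varepsilon N}$''; your extra care here is warranted and not a defect.
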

\proof We have
$$\frac{\sum_{x\in A_{N}\cap
B_{\varepsilon N}}\tilde{\nu}_N(x)}{\sum_{x\in {\frak b}/(N{\frak
b})}\tilde{\nu}_N(x)}=\frac{1}{N^{[K:{\mathbb Q}]}}\sum_{x\in
A_{N}\cap B_{\varepsilon
N}}\tilde{\nu}_N(x)+o(1)$$$$=\frac{1}{N^{[K:{\mathbb Q}]}}\sum_{x\in
A_{N}\cap B_{\varepsilon N}}\nu_N(x)+o(1)=\frac{\sum_{x\in A_N\cap
B_{\varepsilon N}}\nu_N(g)}{\sum_{x\in {\frak b}\cap
B_N}\nu_N(x)}+o(1).$$ So $\{A_{N}\cap B_{\varepsilon N}({\rm mod}
N{\frak b})\}$
 has positive upper density
relative to $\{\tilde{\nu}_N\}$. By Theorem \ref{rsinversesys},
there is a subset $A_{N}\cap B_{\varepsilon N}({\rm mod} N{\frak
b})$, a truncated  residue class of ${\frak b}$ of size $|O_K\cap
B_k|$ such that
$$A({\rm mod} N{\frak b})\subseteq A_{N}\cap B_{\varepsilon N}({\rm mod} N{\frak b}).$$
As $A$ is bounded, and $\varepsilon$ is sufficiently small, we
conclude that
$$A\subseteq A_{N}\cap B_{\varepsilon N}.$$ The theorem follows.
\endproof

\section{The cross-correlation of the truncated von Mangolt function} In this section
we shall establish the cross-correlation of the truncated von
Mangolt function.

The truncated von Mangolt function for the rational number field was
introduced by Heath-Brown \cite{Heath-Brown}. The truncated von
Mangolt function for the Gaussian number field was introduced by Tao
\cite{tao:gaussian}. The cross-correlation of the truncated von
Mangolt function for the rational number field were studied by
Goldston-Y{\i}ld{\i}r{\i}m in \cite{gy1-cor, gy2-cor, gy3-cor}, and
by Green-Tao in \cite{gt-ap, gt-lattice}.

Let
$\varphi: {\mathbb R} \to {\mathbb R}^+$
be  a smooth bump function supported on $[-1,1]$ which equals 1 at 0, and let $R>1$ be a parameter.
 We now define the
truncated von Mangoldt function for the number field
$K$.\begin{definition}We define the truncated \emph{von Mangoldt
function} $\Lambda_{K,R}$ of $K$ by the formula
$$
\Lambda_{K,R}({\frak n}) := \sum_{{\frak d}|{\frak
n}}\mu_K({\frak d})\varphi (\frac{\log {\rm N}({\frak d})}{\log
R}),$$
where $\mu_K$ is the \emph{M\"obius function}
of $K$ defined by the formula
$$\mu_K({\frak n})=\left\{
                      \begin{array}{ll}
                        (-1)^k, & \hbox{} {\frak n}\text{ is a product of }k\text{ distinct prime ideals},\\
                        0, & \hbox{otherwise.}
                      \end{array}
                    \right.
$$
\end{definition}
Note that $\Lambda_{K,R}({\frak n})=1$ if ${\frak n}$ is a prime ideal
with norm $\geq R$.

Let $\zeta_K(z)$ be the zeta function of $K$,
$\phi_K(W):=|O_K/(W))^{\times}|$, $$ \hat{\varphi}(x)=
\int_{-\infty}^\infty e^t \varphi(t) e^{ixt}\ dt,$$and
$$ c_{\varphi} :=\int_{-\infty}^{+\infty}\int_{-\infty}^{+\infty} \frac{(1+iy)(1+iy')}{ (2+iy+iy')}\hat{\varphi}(y) \hat{\varphi}(y') dy
dy'.
$$
From now on, for each $N\in I$, let $$\nu_N(x)=\frac{\phi_K(W)\log
R\cdot{\rm Res}_{z=1}\zeta_K(z)}{ c_{\varphi} W^{[K:{\mathbb
Q}]}}\Lambda_{K,R}^2((Wx+\alpha){\frak b}^{-1}).$$  Here $$\log
R=\frac{\log N}{8|O_K\cap B_k|2^{|O_K\cap
 B_k|}},$$ $W$ is the
product of prime numbers $\leq w:=\log\log N$, and
 $\alpha$ a number prime to $W$

 We now prove the following.
\begin{theorem}\label{crosscorrelation}The system $\{\nu_N\}$ satisfies the $k$-cross-correlation condition.
\end{theorem}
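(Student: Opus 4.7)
The plan is to adapt the Goldston--Y{\i}ld{\i}r{\i}m strategy for correlations of truncated divisor sums to the number field $K$, following Green--Tao's treatment of the rational case and Tao's treatment of the Gaussian case. Substituting the definition of $\Lambda_{K,R}$ and expanding the square, one has
$$\prod_{j=1}^s\nu_N(\psi_j(x)+b_j)=C^s\sum_{{\frak d}_j,{\frak d}'_j}\prod_{j=1}^s\mu_K({\frak d}_j)\mu_K({\frak d}'_j)\varphi\bigl(\tfrac{\log {\rm N}({\frak d}_j)}{\log R}\bigr)\varphi\bigl(\tfrac{\log {\rm N}({\frak d}'_j)}{\log R}\bigr)\chi_j(x),$$
where $\chi_j(x)$ is the indicator that the least common multiple $[{\frak d}_j,{\frak d}'_j]$ divides $(W\psi_j(x)+Wb_j+\alpha){\frak b}^{-1}$ and $C=\phi_K(W)\log R\cdot{\rm Res}_{z=1}\zeta_K(z)/(c_{\varphi}W^{[K:{\mathbb Q}]})$. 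Averaging $x$ over $({\frak b}\cap\lambda I)^m$ and swapping sums, the problem reduces, for each fixed divisor tuple, to a count of lattice points of ${\frak b}\cap\lambda I$ lying in the congruence class cut out by the simultaneous divisibilities.

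Next I apply the Chinese remainder theorem prime by prime. The $W$-trick combined with $\gcd(\alpha,W)=1$ forces every ${\frak d}_j,{\frak d}'_j$ that survives to be supported on prime ideals ${\frak p}$ of $O_K$ above rational primes $p>w=\log\log N$, and at such primes the coefficients of the $\psi_j$, lying in $O_K\cap B_{2k}$, are units modulo ${\frak p}$ by construction of the index set $I$. The mutual independence of $\psi_1,\dots,\psi_s$ then guarantees that for any subset $S\subseteq\{1,\dots,s\}$ the number of $x\bmod{\frak p}$ satisfying the $|S|$ congruences indexed by $S$ is exactly ${\rm N}({\frak p})^{m[K:{\mathbb Q}]-|S|}$, giving the expected factorised local density. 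The count of $x\in{\frak b}\cap\lambda I$ in a given residue class modulo $\prod_j[{\frak d}_j,{\frak d}'_j]$ equals $|{\frak b}\cap\lambda I|/{\rm N}(\prod_j[{\frak d}_j,{\frak d}'_j])$ up to a boundary error that is controllable because the $\varphi$ cutoffs force ${\rm N}({\frak d}),{\rm N}({\frak d}')\le R$. I then invoke the Fourier inversion $\varphi(t)=\frac{1}{2\pi}\int\hat\varphi(y)e^{-(1+iy)t}\,dy$ implicit in the definition of $\hat\varphi$ to rewrite each cutoff as $\varphi(\log{\rm N}({\frak d})/\log R)=\frac{1}{2\pi}\int\hat\varphi(y){\rm N}({\frak d})^{-(1+iy)/\log R}\,dy$. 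Substituting and interchanging, the double divisor sum becomes a $2s$-fold integral of Euler products, each of which factors, up to a ${\frak p}\mid W$ correction, as the ratio of shifted Dedekind zetas $\zeta_K(1+(2+iy_j+iy'_j)/\log R)/[\zeta_K(1+(1+iy_j)/\log R)\zeta_K(1+(1+iy'_j)/\log R)]$. Using $\zeta_K(1+\epsilon)\sim{\rm Res}_{z=1}\zeta_K(z)/\epsilon$ as $\epsilon\to 0$, each ratio tends to $(1+iy_j)(1+iy'_j)/[(2+iy_j+iy'_j)\log R\cdot{\rm Res}_{z=1}\zeta_K(z)]$, and the remaining Fourier integrals against $\hat\varphi$ precisely reproduce $c_\varphi^s$ by its defining formula. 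Together with the $W$-local factor $(W^{[K:{\mathbb Q}]}/\phi_K(W))^s$ and the prefactor $C^s$, everything telescopes to the main term $1+o(1)$.

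The main obstacle will be the quantitative error analysis. One needs uniform polylogarithmic bounds on the Euler-product remainders on suitable tail contours, generalising the Goldston--Y{\i}ld{\i}r{\i}m $\zeta$-estimates to $\zeta_K$ via classical convexity and zero-free-region results, and one must show that truncating $\hat\varphi$ beyond $|y|\ll\log R$ is negligible. A secondary technicality is the boundary contribution from summing over the parallelotope ${\frak b}\cap\lambda I$ rather than a full residue system: the Lipschitz boundary error of order $O(\lambda^{m[K:{\mathbb Q}]-1})$ per divisor class must be dominated by the main term, which is precisely why the hypothesis $\lambda\ge N\log^{-2sA}N$ is imposed and why the divisor truncation $R=N^{1/(8|O_K\cap B_k|2^{|O_K\cap B_k|})}$ is built into the definition of $\nu_N$.
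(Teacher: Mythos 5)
Your proposal follows essentially the same route as the paper's proof: expand $\Lambda_{K,R}^2$ into a double divisor sum, count lattice points of ${\frak b}\cap\lambda I$ in the residue classes cut out by the divisibility conditions (with the boundary error absorbed because ${\rm N}({\frak d})\le R$ and $\lambda\ge N\log^{-2sA}N$), factor the local densities by the Chinese remainder theorem using the $W$-trick and the independence of the $\psi_j$, and then use the Fourier representation of the bump $\varphi$ to turn the divisor sum into Euler products compared with shifted Dedekind zeta ratios, with $c_\varphi$ emerging from the final integration against $\hat\varphi$. Two minor quibbles that do not change the verdict: the local count modulo $\wp$ should be ${\rm N}(\wp)^{m-|S|}$ rather than ${\rm N}(\wp)^{m[K:{\mathbb Q}]-|S|}$, and no zero-free region or convexity input is needed, since the arguments of $\zeta_K$ stay within $O(1/\sqrt{\log R})$ of the pole at $1$, exactly as in the paper.
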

\proof Given any parallelotope $I$ in $K_{\infty}$, given any
positive integers $s\leq |O_K\cap B_k|2^{|O_K\cap B_k|},m\leq
2|O_K\cap B_k|$, given any $N\log^{-2sA}N<\lambda<N$, and given any
mutually independent linear forms $\psi_1,\cdots,\psi_{s}$ in $m$
variables whose coefficients are numbers in $O_K\cap B_{2k}$, we
show that
$$\frac{1}{|{\frak b}\cap(\lambda I)|^m}\sum_{\stackrel{x_i\in {\frak b}\cap(\lambda I)}{i=1,\cdots,m}}\prod_{j=1}^{s}\nu_N(\psi_j(x)+b_j)=1+o(1),\
N\rightarrow\infty$$ uniformly for all numbers $b_1,\cdots,b_{s}\in
{\frak b}$.

We define
$${\frak
S}=\sum_{{\frak d},{\frak d}'}\omega(({\frak d}_i\cap{\frak
d}_i')_{1\leq i\leq s})\prod_{i=1}^s\mu_{K}({\frak d}_i)\mu_{K}({\frak
d}_i')\varphi(\frac{\log{\rm N}{\frak d}_i}{\log
R})\varphi(\frac{\log{\rm N}{\frak d}'_i}{\log R}),$$ where ${\frak
d}$ and ${\frak d}'$ run over $s$-tuples of ideals of $O_K$, and
$$\omega(({\frak d}_i)_{1\leq i\leq s})=\frac{|\{x\in({\frak b}/({\frak b}\cdot \cap_{i=1}^s{\frak d}_i))^m:{\frak d}_i|
(W\psi_i(x) + b'_i){\frak b}^{-1},\forall i=1,\cdots,s\}|}{({\rm
N}\cap_{i=1}^s{\frak d}_i)^{m}}$$ with $b'_i=Wb_i+\alpha$.

Let $\{\gamma_{j}\}$ ($j=1,\cdots,[K:{\mathbb Q}]$) be a ${\mathbb
Z}$-basis of ${\frak b}$ such that
$\{\lambda_{j}\gamma_{j}\}$ is a
${\mathbb Z}$-basis of ${\frak b}\cdot\cap_{i=1}^s{\frak d}_i$,
where each $\lambda_{i}$ is a positive integer. Set
$$I_0=\{x\in K_{\infty}:
x_i\in\sum_{j=1}^{[K:{\mathbb
Q}]}(0,1]\lambda_{j}\gamma_{j}\}.$$Then $$\omega(({\frak
d}_i)_{1\leq i\leq s}))=\frac{|\{x\in (I_0\cap{\frak b})^m:{\frak
d}_i| (W\psi_i(x) + b'_i){\frak b}^{-1},\forall
i=1,\cdots,s\}|}{({\rm N}\cap_{i=1}^s{\frak d}_i)^{m}}.$$ The number
of translates of $I_0^m$ by vectors in $({\frak
b}\cdot\cap_{i=1}^s{\frak d}_i)^m$ which intersect the boundary of
$\lambda I^m$ is bounded by $O(\lambda^{m[K:{\mathbb
Q}]-1}/(\prod_{j=1}^{[K:{\mathbb Q}]}\lambda_{j})^{m-1})$. So the
number of translates of $I_0^m$ by vectors in $({\frak
b}\cdot\cap_{i=1}^s{\frak d}_i)^m$ which lie in the interior of
$\lambda I^m$ is $$\frac{{\rm Vol} (I)^m}{{\rm
Vol}(I_0)^m}\lambda^{m[K:{\mathbb Q}]}+ O(\lambda^{m[K:{\mathbb
Q}]-1}{\rm N}(\cap_{i=1}^s{\frak d}_i)/(\prod_{j=1}^{[K:{\mathbb
Q}]}\lambda_{j})^{m}).$$ It follows that
$$ \frac{|\{x\in
(\lambda I\cap{\frak b})^m:{\frak d}_i| (W\psi_i(x) + b'_i){\frak
b}^{-1},\forall i=1,\cdots,s\}|}{\lambda^{m[K:{\mathbb Q}]} {\rm
Vol}(I)^m/(\sqrt{|d_K|}{\rm N}{\frak b})^{[K:{\mathbb Q}]}}=
\omega(({\frak d}_1)_{1\leq i\leq s}) + O(\frac{{\rm
N}(\cap_{i=1}^s{\frak d}_i)}{\lambda}),$$ where $d_K$ is the
discriminant of $K$. From that estimate one can
infer$$\frac{(\sqrt{|d_K|}{\rm N}{\frak
b})^{m}}{\lambda^{m[K:{\mathbb Q}]} {\rm Vol}(I)^m}\sum\limits_{ x
\in (\lambda I\cap{\frak b})^m}\prod_{i=1}^s
\Lambda_{K,R}^2((W\psi_i(x) + b'_i){\frak b}^{-1})={\frak
S}+O(\frac{R^{4s}}{\lambda}).
$$
Therefore we are is reduced to the following.
$${\frak
S}=(1 + o(1))(\frac{ c_{\varphi} W^{[K:{\mathbb Q}]}}{\phi_K(W)\log
R\cdot{\rm Res}_{z=1}\zeta_K(z)})^{s}.
$$
We define
$$F(t,t')=\sum_{{\frak d},{\frak d}'}\omega(({\frak
d}_j\cap{\frak d}'_j)_{1\leq j\leq s})\prod_{j=1}^s\frac{\mu_{K}({\frak
d}_j)\mu_{K}({\frak d}'_j)} {\N({\frak d}_j)^{\frac{1+it_j}{\log
R}}\N({\frak d}'_j)^{\frac{1+it'_j}{\log R}}},\ t,t'\in{\mathbb
R}^s,$$where ${\frak d}$ and ${\frak d}'$ run over $s$-tuples of
ideals of $O_K.$

 It is easy to see that, for all $B >
0$,
$$
e^x\varphi(x) = \int_{-\sqrt{\log R}}^{\sqrt{\log R}}
\hat{\varphi}(t) e^{-ixt}\ dt+O((\log R)^{-B}).
$$
It follows that for all $B>0$,$${\frak S}= \int_{[-\sqrt{\log
R},\sqrt{\log R}]^s}\int_{[-\sqrt{\log R},\sqrt{\log R}]^s}
F(t,t')\hat{\varphi}(t)\hat{\varphi}(t')dtdt'$$$$+O((\log
R)^{-B})\cdot\sum_{{\frak d},{\frak d}'}\omega(({\frak
d}_j\cap{\frak d}'_j)_{1\leq j\leq
s})\prod_{i=1}^s\frac{|\mu_{K}({\frak d}_j)\mu_{K}({\frak d}'_j)|}
{\N({\frak d}_j)^{1/\log R}\N({\frak d}'_j)^{1/\log R}}.
$$
Hence we are reduced to prove the following.
$$\sum_{{\frak d},{\frak d}'}\omega(({\frak
d}_j\cap{\frak d}'_j)_{1\leq j\leq s})\prod_{j=1}^s\frac{|\mu_{K}({\frak
d}_j)\mu_{K}({\frak d}'_j)|} {\N({\frak d}_j)^{1/\log R}\N({\frak
d}'_j)^{1/\log R}}\ll\log^{O_{s}(1)}R,$$ and, for
$t,t'\in[-\sqrt{\log R},\sqrt{\log
R}]^s$,
$$
F(t,t')=(1 +
o(1))(\frac{W^{[K:{\mathbb
Q}]}}{\phi_K(W)\log R\cdot{\rm Res}_{z=1}\zeta_K(z)})^{s}\prod_{j=1}^s\frac{(1+it_j)(1+it'_j)}{ (2+it_j+it'_j)}.$$

We prove the equality first. Applying the Chinese remainder theorem,
one can show that
$$\omega(({\frak
d}_j)_{1\leq j\leq s})=\prod_{\wp}\omega(({\frak d}_j,\wp)_{1\leq j\leq s}),$$
where $\wp$ runs over nonzero prime ideals of $O_K$. One
can also show that
$$\omega((({\frak
d}_j,\wp))_{1\leq j\leq s})=\left\{
                     \begin{array}{ll}1, & \hbox{ } \prod_{j=1}^s({\frak
d}_j,\wp)=(1),\\
0,&\hbox{}  \prod_{j=1}^s({\frak d}_j,\wp)\neq(1), \wp|W.
                     \end{array}
                   \right.
$$
And, if $\wp\nmid W$ and $W$ is sufficiently large,
then one can show that
$$\omega((({\frak
d}_j,\wp))_{1\leq j\leq s})\left\{
                    \begin{array}{ll}
                      =1/{\rm N}\wp, & \hbox{} \prod_{j=1}^s({\frak d}_j,\wp)=\wp\\
                      \leq1/{\rm N}\wp^2, & \hbox{}\wp^2\mid \prod_{j=1}^s({\frak
d}_j,\wp).
                    \end{array}
                  \right.
$$
It follows that
$$
F(t,t')=\prod_{\wp}\sum_{{\frak d}_j,{\frak d}'_j\mid
\wp,\forall j=1,\cdots,s}\omega(({\frak d}_j\cap{\frak d}'_j)_{1\leq j\leq s})\prod_{j=1}^s\frac{\mu_{K}({\frak d}_j)\mu_{K}({\frak d}'_j)}
{{\rm N}{\frak d}_j^{\frac{1+it_j}{\log R}}{\rm N}{{\frak
d}'_j}^{\frac{1+it'_j}{\log R}}}
$$$$ =\prod_{\wp\nmid W}(1 + \sum_{j=1}^s-{\rm
N}\wp^{-1-\frac{1+it_j}{\log R}} -{\rm N}\wp^{-1-\frac{1+it'_j}{\log
R}}+ {\rm N}\wp^{-1-\frac{2+it_j+it'_j}{\log R}}+
O_{s}(\frac{1}{{\rm N}\wp^2}))$$$$ =\prod_{p\nmid W}(1 +
O_{s}(\frac{1}{p^2})) \prod_{j=1}^s\prod_{\wp\nmid W}\frac{(1
-{\rm N}\wp^{-1-\frac{1+it_j}{\log R}})(1 -{\rm
N}\wp^{-1-\frac{1+it'_j}{\log R}})}{ (1-{\rm
N}\wp^{-1-\frac{2+it_j+it'_j}{\log R}})}$$
$$ =(1+O(\frac{1}{\log R})) \prod_{j=1}^s \frac{
\zeta_K(1+\frac{2+it_s+it'_s}{\log R})}{\zeta_K(1+\frac{1+it_s}{\log
R})\zeta_K(1+\frac{1+it'_j}{\log R})}\prod_{\wp\mid W}\frac{ (1-{\rm
N}\wp^{-1-\frac{2+it_j+it'_j}{\log R}})}{(1 -{\rm
N}\wp^{-1-\frac{1+it_j}{\log R}})(1 -{\rm
N}\wp^{-1-\frac{1+it'_j}{\log R}})}.$$ From the estimate
$$\zeta_K(z)=\frac{{\rm Res}_{z=1}\zeta_K(z)}{z-1}+O(1), \ z\to 1,$$
and the estimate $$e^z=1+O(z),\ z\to 0,$$ we infer that
$$ F(t,t')
=(1 + O(\frac {1}{\log R}))\cdot\prod_{\wp\mid W}(1+O(\frac{\log
{\rm N}\wp}{{\rm N}\wp\log^{1/2} R}))\cdot$$$$(\frac{W^{[K:{\mathbb
Q}]}}{\phi_K(W)\log R\cdot{\rm Res}_{z=1}\zeta_K(z)})^{s}\prod_{j=1}^s\frac{(1+it_j)(1+it'_j)}{ (2+it_j+it'_j)}.$$ Applying the
estimate
$$\prod_{\wp\mid W}(1+\frac{\log {\rm
N}\wp}{{\rm N}\wp})=O(e^{\log^2w}),$$ we arrive at
$$ F(t,t')
=(1 + o(1))(\frac{W^{[K:{\mathbb
Q}]}}{\phi_K(W)\log R\cdot{\rm Res}_{z=1}\zeta_K(z)})^{s}\prod_{j=1}^s\frac{(1+it_j)(1+it'_j)}{ (2+it_j+it'_j)}$$ as required.

We now turn to prove the estimate $$\sum_{{\frak d},{\frak
d}'}\omega(({\frak d}_j\cap{\frak d}'_j)_{1\leq j\leq s})\prod_{j=1}^s\frac{|\mu_{K}({\frak d}_j)\mu_{K}({\frak d}'_j)|} {\N({\frak
d}_j)^{1/\log R}\N({\frak d}'_j)^{1/\log R}}\ll\log^{O_{s}(1)}R.$$
We have $$\sum_{{\frak d},{\frak d}'}\omega(({\frak d}_j\cap{\frak
d}'_j)_{1\leq j\leq s})\prod_{j=1}^s\frac{|\mu_{K}({\frak
d}_j)\mu_{K}({\frak d}'_j)|} {\N({\frak d}_j)^{1/\log R}\N({\frak
d}'_j)^{1/\log R}}$$$$=\prod_{\wp}\sum_{{\frak d}_j,{\frak
d}'_j\mid \wp,\forall j=1,\cdots,s}\omega(({\frak d}_j\cap{\frak
d}'_j)_{1\leq j\leq s})\prod_{j=1}^s\frac{1} {{\rm N}{\frak
d}_j^{\frac{1}{\log R}}{\rm N}{{\frak d}'_j}^{\frac{1}{\log R}}}$$$$
=\prod_{\wp\nmid W}(1 + {\rm N}\wp^{-1-\frac{1}{\log
R}})^{O(1)}$$$$ =\prod_{p}(1 + p^{-1-\frac{1}{\log
R}})^{O(1)}=\zeta(1+\frac{1}{\log R})^{O(1)}\ll\log^{O(1)}R.$$
 This completes the proof of the theorem.\endproof

\section{The auto-correlation of the truncated von Mangolt function} In this section
we shall establish the auto-correlation of the truncated von Mangolt
function.

The auto-correlation of the truncated von Mangolt function for the
rational number field was studied by Goldston-Y{\i}ld{\i}r{\i}m in
\cite{gy1-cor, gy2-cor, gy3-cor}, and by Green-Tao in \cite{gt-ap,
gt-lattice}.

 We now prove the following.
\begin{theorem}\label{autocorrelation}The system $\{\nu_N\}$ satisfies the $k$-auto-correlation condition.
\end{theorem}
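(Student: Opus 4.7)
The plan is to mimic the Goldston--Y{\i}ld{\i}r{\i}m argument of the proof of Theorem \ref{crosscorrelation}, keeping careful track of how primes dividing some difference $y_i - y_j$ inflate the local factors of the resulting Euler product; these inflations will be absorbed into the definition of $\tau_N$. Expanding $\prod_i \Lambda_{K,R}^2((W(x+y_i)+\alpha){\frak b}^{-1})$ and swapping orders of summation, the left-hand side of the auto-correlation inequality becomes a constant multiple of
$$\sum_{{\frak d},{\frak d}'} \omega_y\bigl(({\frak d}_i \cap {\frak d}'_i)_{1 \le i \le s}\bigr) \prod_{i=1}^s \mu_K({\frak d}_i) \mu_K({\frak d}'_i) \varphi\!\left(\tfrac{\log \N {\frak d}_i}{\log R}\right) \varphi\!\left(\tfrac{\log \N {\frak d}'_i}{\log R}\right),$$
where $\omega_y$ is the natural local density of $x \in (NI) \cap {\frak b}$ satisfying ${\frak c}_i \mid (W(x+y_i)+\alpha){\frak b}^{-1}$ for every $i$. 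The boundary-of-$NI$ argument from Theorem \ref{crosscorrelation} together with the Chinese remainder theorem factors $\omega_y = \prod_\wp \omega_{y,\wp}$.

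Next I would evaluate $\omega_{y,\wp}$ prime by prime. For $\wp \mid W$, the arguments $W(x+y_i)+\alpha$ are units modulo $\wp$, so ${\frak d}_i = {\frak d}'_i = (1)$ is forced. For $\wp \nmid W$ coprime to every $(y_i - y_j){\frak b}^{-1}$ the residues $y_i \bmod \wp$ are pairwise distinct, and the local factor coincides with the one computed in Theorem \ref{crosscorrelation}, giving the usual ratio of $\zeta_K$-Euler factors. For $\wp \nmid W$ dividing some $y_i - y_j$, two of the residues coalesce, producing an extra summand of size $\N\wp^{-1}$ inside $\omega_{y,\wp}$; after dividing out the standard $\zeta_K$-Euler factors, this manifests as a $1 + O_s(\N\wp^{-1/2})$ correction in the final estimate. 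Carrying out the $t,t'$-contour integration exactly as in Theorem \ref{crosscorrelation} and collecting these local contributions yields
$$\frac{1}{|(NI) \cap {\frak b}|} \sum_{x \in (NI) \cap {\frak b}} \prod_{i=1}^s \nu_N(x + y_i) \ll \prod_{\substack{\wp \,\nmid\, W \\ \wp \,\mid\, \prod_{i<j}(y_i - y_j){\frak b}^{-1}}} \!\bigl(1 + O_s(\N\wp^{-1/2})\bigr).$$

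I would then define
$$\sigma_N(y) := \prod_{\substack{\wp \,\mid\, y{\frak b}^{-1} \\ \wp \,\nmid\, W}} \bigl(1 + C_s \N\wp^{-1/2}\bigr), \qquad \tau_N(y) := \frac{1}{\binom{s}{2}} \sigma_N(y)^{\binom{s}{2}},$$
and invoke the AM--GM inequality $\prod_{i<j} \sigma_N(y_i - y_j) \le \frac{1}{\binom{s}{2}} \sum_{i<j} \sigma_N(y_i - y_j)^{\binom{s}{2}}$, together with $1 + x \ge 1$, to bound the right-hand side of the previous display (a product over primes dividing some pairwise difference) by the required sum $\sum_{i<j} \tau_N(y_i - y_j)$.

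The principal obstacle is then the moment bound $\frac{1}{|(NI) \cap {\frak b}|} \sum_{x \in (NI) \cap {\frak b}} \tau_N^M(x) = O_M(1)$, to be verified with a standard divisor-sum estimate in the style of Shiu and Green--Tao. Expanding $\tau_N^M$ multiplicatively over the prime ideal divisors of $x{\frak b}^{-1}$ reduces the sum to a truncated Euler product of the form $\prod_{\wp \,\nmid\, W} \bigl(1 + O_{M,s}(\N\wp^{-3/2})\bigr)$ plus a negligible remainder, which converges absolutely because the $W$-trick $w = \log\log N \to \infty$ removes the small-prime contribution that would otherwise dominate. As in Green--Tao \cite{gt-ap}, the choice $w = \log\log N$ is precisely what keeps the singular series finite and allows $\tau_N$ to serve as the required majorant.
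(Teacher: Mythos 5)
Your proposal is correct and takes essentially the same route as the paper: the same Goldston--Y{\i}ld{\i}r{\i}m expansion, Chinese remainder factorization, and prime-by-prime local analysis giving the bound $\ll\prod_{\wp\mid(y_i-y_j)\text{ for some }i<j,\ \wp\nmid W}(1+O_s({\rm N}\wp^{-1}))$, which is exactly the content of the paper's lemma; your additional step (weakening to $(1+C_s{\rm N}\wp^{-1/2})$, defining $\tau_N$, applying AM--GM, and verifying the moment bound by a divisor-sum estimate) is the standard Green--Tao completion that the paper leaves implicit when it asserts the theorem ``follows from'' the lemma. One side remark is inaccurate but harmless: the convergence of $\prod_{\wp}(1+O_{M,s}({\rm N}\wp^{-3/2}))$ in the moment estimate does not depend on the $W$-trick (that Euler product converges even with all small primes included); the role of $w=\log\log N$ is to normalize the singular series to $1+o(1)$ in the cross-correlation main term, not to make this product or the singular series finite.
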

The above theorem follows from the following lemma.
\begin{lemma}Let $I$ be any parallelotope in $K_{\infty}$. Then
$$\frac{1}{|(NI)\cap{\frak b}|}
\sum\limits_{x\in (NI)\cap{\frak b}}\prod_{i=1}^s
 \nu_N(x +
y_i) \ll \prod_{1\leq i<j\leq s}\prod_{\wp\mid(y_i-y_j)}(1+
O_{s}(\frac{1}{{\rm N}\wp}))$$ uniformly for all $s$-tuples
$y\in{\frak b}^s$ with distinct coordinates.\end{lemma}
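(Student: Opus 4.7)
The plan is to imitate the Goldston--Yıldırım / Green--Tao divisor correlation machinery used in the proof of Theorem \ref{crosscorrelation}, while tracking carefully how the local arithmetic degrades at primes dividing some difference $y_i-y_j$. Writing
$$\nu_N(x+y_i)=C_K\,\Lambda_{K,R}^2((W(x+y_i)+\alpha){\frak b}^{-1}),\quad C_K=\frac{\phi_K(W)\log R\cdot{\rm Res}_{z=1}\zeta_K(z)}{c_{\varphi}W^{[K:\Q]}},$$
one expands each squared truncated von Mangoldt factor as a double sum over divisors $({\frak d}_i,{\frak d}'_i)$ weighted by $\mu_K\mu_K\varphi\varphi$. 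Swapping the sum over $x\in(NI)\cap{\frak b}$ with the divisor sums, and using that the support of $\varphi$ forces ${\rm N}({\frak d}_i\cap{\frak d}'_i)\le R^2\ll N$, the inner counting density becomes the local density
$$\omega_y(({\frak d}_i\cap{\frak d}'_i)_i):=\frac{|\{x\in{\frak b}/({\frak b}\cdot\cap_i({\frak d}_i\cap{\frak d}'_i)):{\frak d}_i\mid(W(x+y_i)+\alpha){\frak b}^{-1},\ \forall i\}|}{{\rm N}(\cap_i({\frak d}_i\cap{\frak d}'_i))},$$
up to a boundary error which is negligible after summation against $\mu_K\varphi$.

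By the Chinese remainder theorem $\omega_y$ factors as an Euler product $\prod_{\wp}\omega_y(\cdot,\wp)$. Inserting the Mellin identity
$$e^x\varphi(x)=\int_{-\sqrt{\log R}}^{\sqrt{\log R}}\hat{\varphi}(t)e^{-ixt}\,dt+O((\log R)^{-B})$$
recasts the whole divisor sum as an integral, against $\hat{\varphi}(t)\hat{\varphi}(t')$, of the Euler product
$$F_y(t,t')=\prod_{\wp}\sum_{{\frak d}_j,{\frak d}'_j\mid\wp}\omega_y(({\frak d}_j\cap{\frak d}'_j)_j)\prod_{j=1}^s\frac{\mu_K({\frak d}_j)\mu_K({\frak d}'_j)}{{\rm N}{\frak d}_j^{(1+it_j)/\log R}{\rm N}{{\frak d}'_j}^{(1+it'_j)/\log R}},$$
exactly in the style of the proof of Theorem \ref{crosscorrelation}.

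At \emph{generic} primes $\wp$, meaning $\wp\mid W$ or $\wp\nmid W\prod_{i<j}(y_i-y_j)$, the residues $W(x+y_i)+\alpha$ are pairwise distinct modulo $\wp$, so any simultaneous divisibility $\wp\mid(W(x+y_i)+\alpha){\frak b}^{-1}$ for two distinct $i$ is impossible, and the local factor at $\wp$ is exactly the one for $s$ independent linear forms already computed in the proof of Theorem \ref{crosscorrelation}. At the \emph{bad} primes $\wp\nmid W$ with $\wp\mid y_i-y_j$ for some $i\ne j$, two shifted residues coincide mod $\wp$, the joint divisibility probability jumps from $O({\rm N}\wp^{-2})$ to $O({\rm N}\wp^{-1})$, and the local factor acquires an extra multiplicative contribution of $1+O_s({\rm N}\wp^{-1})$. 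Collecting these,
$$F_y(t,t')=F(t,t')\prod_{1\le i<j\le s}\prod_{\wp\mid(y_i-y_j)}\bigl(1+O_s({\rm N}\wp^{-1})\bigr),$$
with $F$ the same function analyzed in the proof of Theorem \ref{crosscorrelation}. Integration against $\hat{\varphi}\hat{\varphi}$ and multiplication by $C_K^s$ then yield $1+o(1)$ times the bad-prime product, which is the asserted estimate.

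The main obstacle is the uniform bookkeeping at bad primes: one has to verify that, even when several differences $y_i-y_j$ are simultaneously divisible by the same $\wp$, the local factor at $\wp$ still satisfies the uniform bound $1+O_s({\rm N}\wp^{-1})$ with an implicit constant depending only on $s$ (not on $y$, nor on the multiplicity of the collisions), and that these bad factors can be pulled outside the Mellin integral without disturbing the residue cancellation that produces $c_{\varphi}$ in $C_K$. This reduces to finitely many purely local sieve computations at each bad $\wp$, strictly analogous to those carried out in \cite{gt-ap} in the rational case; once these are in hand, the lemma --- and hence Theorem \ref{autocorrelation} --- follow.
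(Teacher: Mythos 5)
Your proposal follows essentially the same route as the paper: expand $\Lambda_{K,R}^2$ into divisor sums, pass to the local densities and their Euler product via the Chinese remainder theorem, insert the Mellin-type identity for $e^x\varphi(x)$, and observe that only the primes $\wp\nmid W$ dividing some difference $y_i-y_j$ degrade the local factor from $O({\rm N}\wp^{-2})$ to $O({\rm N}\wp^{-1})$, giving the product $\prod_{\wp\mid(y_i-y_j)}(1+O_s({\rm N}\wp^{-1}))$. The only cosmetic difference is that you aim for an asymptotic ``$1+o(1)$ times the bad-prime product,'' whereas the lemma (and the paper's argument, which simply bounds $F_2$ uniformly in $t,t'$ and integrates against the rapidly decaying $\hat{\varphi}$) only requires an upper bound, so the residue-cancellation concern you raise is not actually needed.
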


\proof We define
$${\frak
S}_2=\sum_{{\frak d},{\frak d}'}\omega_2(({\frak d}_i\cap{\frak
d}'_i)_{1\leq i\leq s})\prod_{i=1}^s\mu_{K}({\frak
d}_i)\mu_{K}({\frak d}'_i)\varphi(\frac{\log{\rm N}{\frak d}_i}{\log
R})\varphi(\frac{\log{\rm N}{\frak d}'_i}{\log R}),$$ where ${\frak
d}$ and ${\frak d}'$ run over $s$-tuples of ideals of $O_K$, and
$$\omega_2(({\frak d}_i)_{1\leq i\leq s})
=\frac{|\{x\in{\frak b}/({\frak b}\cdot\cap_{i=1}^s{\frak d}_i):
{\frak d}_s| (Wx + h_i){\frak b}^{-1},\forall i=1,\cdots,s\}|}{({\rm
N}\cap_{i=1}^s{\frak d}_i)},$$ where $h_i=Wb(y)+Wy_i+\alpha$.

Let $\{\gamma_{j}\}$ ($j=1,\cdots,[K:{\mathbb Q}]$) be a ${\mathbb
Z}$-basis of ${\frak b}$ such that $\{\lambda_{j}\gamma_{j}\}$ is a
${\mathbb Z}$-basis of ${\frak b}\cdot\cap_{i=1}^s{\frak d}_i$,
where each $\lambda_{i}$ is a positive integer. Set
$$I_0=\{x\in K_{\infty}:
x_i\in\sum_{j=1}^{[K:{\mathbb
Q}]}(0,1]\lambda_{j}\gamma_{j}\}.$$Then $$\omega_2(({\frak
d}_i)_{1\leq i\leq s}))=\frac{|\{x\in I_0\cap{\frak b}:{\frak d}_i|
(Wx + h_i){\frak b}^{-1},\forall i=1,\cdots,s\}|}{({\rm
N}\cap_{i=1}^s{\frak d}_i)}.$$

The number of translates of $I_0$ by vectors in ${\frak
b}\cdot\cap_{i=1}^s{\frak d}_i$ which intersect the boundary of
$\lambda I$ is bounded by $O(\lambda^{[K:{\mathbb Q}]-1})$. So the
number of translates of $I_0$ by vectors in ${\frak
b}\cdot\cap_{i=1}^s{\frak d}_i$ which lie in the interior of
$\lambda I$ is
$$\frac{{\rm Vol} (I)}{{\rm Vol}(I_0)}\lambda^{[K:{\mathbb Q}]}+
O(\lambda^{[K:{\mathbb Q}]-1}/{\rm N}(\cap_{i=1}^s{\frak
d}_i)^{[K:{\mathbb Q}]-1}).$$ It follows that
$$ \frac{{\rm N}{\frak a}\sqrt{|d_K|}|\{x\in
\lambda I\cap{\frak b}:{\frak d}_i| (Wx + h_i){\frak b}^{-1},\
\forall i=1,\cdots,s\}|}{\lambda^{[K:{\mathbb Q}]} {\rm
Vol}(I)}$$$$= \omega_2(({\frak d}_i)_{1\leq i\leq s}) + O(\frac{{\rm
N}(\cap_{s\in S}{\frak d}_s)}{\lambda}).$$ From that estimate one
can infer$$(\frac{W^{[K:{\mathbb Q}]}}{\phi_K(W)\log
R})^{s}\frac{1}{|(NI)\cap{\frak b}|} \sum\limits_{x\in
(NI)\cap{\frak b}}\prod_{i=1}^s
 \nu_N(x +
y_i)={\frak S}_2+O(\frac{R^{4s}}{\lambda}).
$$
So we are reduced to proving that
$${\frak
S}_2 \ll (\frac{W^{[K:{\mathbb Q}]}}{\phi_K(W)\log
R})^{s}\prod_{\wp\mid\Delta}(1+ O_{s}(\frac{1}{{\rm N}\wp})),$$
whenever
$$\Delta:=\prod_{i\neq j}(y_i-y_j)\neq0.$$
 We define
$$F_2(t,t')=\sum_{{\frak d},{\frak d}'}\omega_2(({\frak
d}_i\cap{\frak d}'_i)_{1\leq i\leq
s})\prod_{j=1}^s\frac{\mu_{K}({\frak d}_j)\mu_{K}({\frak d}'_j)}
{\N({\frak d}_j)^{\frac{1+it_j}{\log R}}\N({\frak
d}'_j)^{\frac{1+it'_j}{\log R}}},\ t,t'\in{\mathbb R}^s,$$where
${\frak d}$ and ${\frak d}'$ run over $s$-tuples of ideals of $O_K.$

For all $B>0$, we have
$${\frak S}_2=\int_{[-\sqrt{\log R},\sqrt{\log R}]^s}\int_{[-\sqrt{\log R},\sqrt{\log R}]^s}
F_2(t,t')\psi(t)\psi(t')dtdt'$$$$+O_B((\log
R)^{-B})\cdot\sum_{{\frak d},{\frak d}'}\omega_2(({\frak
d}_i\cap{\frak d}'_i)_{1\leq i\leq
s})\prod_{j=1}^s\frac{|\mu_{K}({\frak d}_j)\mu_{K}({\frak d}'_j)|}
{\N({\frak d}_j)^{1/\log R}\N({\frak d}'_j)^{1/\log R}}.
$$
Hence we are reduced to prove the following.
$$\sum_{{\frak d},{\frak d}'}\omega_2(({\frak
d}_i\cap{\frak d}'_i)_{1\leq i\leq
s})\prod_{j=1}^s\frac{|\mu_{K}({\frak d}_j)\mu_{K}({\frak d}'_j)|}
{\N({\frak d}_j)^{1/\log R}\N({\frak d}'_j)^{1/\log
R}}\ll\log^{O_{s}(1)}R,$$ and, for $t,t'\in[-\sqrt{\log
R},\sqrt{\log R}]^s$,
$$
F_2(t,t') \ll (\frac{W^{[K:{\mathbb Q}]}}{\phi_K(W)\log
R})^{s}\prod_{\wp\mid\Delta,\wp\nmid W}(1+ O_{s}(\frac{1}{{\rm
N}\wp}))\prod_{j=1}^s\frac{(1+|t_j|)(1+|t'_j|)}{
(2+|t_j|+|t'_j|)}.$$

We prove the second inequality but omit the proof of first one.
Applying the Chinese remainder theorem, one can show that
$$\omega_2(({\frak
d}_i)_{1\leq i\leq s})=\prod_{\wp}\omega_2(({\frak d}_i,\wp)_{1\leq
i\leq s}).$$ One can also show that
$$\omega_2((({\frak
d}_i,\wp))_{1\leq i\leq s})=\left\{
                     \begin{array}{ll}1, & \hbox{ } \prod_{i=1}^s({\frak
d}_i,\wp)=(1),\\
0,&\hbox{}  \prod_{i=1}^s({\frak d}_i,\wp)\neq(1), \wp|W.
                     \end{array}
                   \right.
$$
And, if $\wp\nmid W$ and $w$ is sufficiently large, then one can
show that
$$\omega_2((({\frak
d}_i,\wp))_{1\leq i\leq s})\left\{
                    \begin{array}{ll}
                      =1/{\rm N}\wp, & \hbox{} \prod_{i=1}^s({\frak d}_i,\wp)=\wp\\
                     =0, & \hbox{}\wp^2\mid \prod_{i=1}^s({\frak
d}_i,\wp),\wp\nmid\Delta,\\
\leq1/{\rm N}\wp, & \hbox{}\wp^2\mid \prod_{s\in S}({\frak
d}_s,\wp),\wp\mid\Delta.
                    \end{array}
                  \right.
$$
It follows that
$$
F_2(t,t')=\prod_{\wp}\sum_{{\frak d}_i,{\frak d}'_i\mid \wp,\forall
i=1,\cdots,s}\omega_2(({\frak d}_i\cap{\frak d}'_i)_{1\leq i\leq
s})\prod_{j=1}^s\frac{\mu_{K}({\frak d}_j)\mu_{K}({\frak d}'_j)}
{{\rm N}{\frak d}_s^{\frac{1+it_j}{\log R}}{\rm N}{{\frak
d}'_j}^{\frac{1+it'_j}{\log R}}}
$$$$ =\prod_{\wp\nmid W\Delta}(1 + \sum_{j=1}^s-{\rm
N}\wp^{-1-\frac{1+it_j}{\log R}} -{\rm N}\wp^{-1-\frac{1+it'_j}{\log
R}}+ {\rm N}\wp^{-1-\frac{2+it_j+it'_j}{\log R}})\prod_{\wp\nmid
W,\wp\mid\Delta}(1+ O_{s}(\frac{1}{{\rm N}\wp}))$$$$\ll
\prod_{\wp\mid\Delta,\wp\nmid W}(1+ O_{s}(\frac{1}{{\rm
N}\wp}))\prod_{j=1}^s\prod_{\wp\nmid W\Delta}\frac{(1 -{\rm
N}\wp^{-1-\frac{1+it_j}{\log R}})(1 -{\rm
N}\wp^{-1-\frac{1+it'_j}{\log R}})}{ (1-{\rm
N}\wp^{-1-\frac{2+it_j+it'_j}{\log R}})}$$
$$ \ll (\frac{W^{[K:{\mathbb Q}]}}{\phi_K(W)})^{s}\prod_{\wp\mid\Delta,\wp\nmid W}(1+
O_{s}(\frac{1}{{\rm N}\wp})) \prod_{j=1}^s \frac{
\zeta_K(1+\frac{2+it_j+it'_j}{\log R})}{\zeta_K(1+\frac{1+it_j}{\log
R})\zeta_K(1+\frac{1+it'_j}{\log R})}$$$$
 \ll (\frac{W^{[K:{\mathbb Q}]}}{\phi_K(W)\log R})^{s}\prod_{\wp\mid\Delta,\wp\nmid W}(1+
O_{s}(\frac{1}{{\rm N}\wp}))\prod_{j=1}^s\frac{(1+|t_j|)(1+|t'_j|)}{
(2+|t_j|+|t'_j|)}.$$
 This completes the proof of the lemma.\endproof
\section{Proof of the main theorem}
In this section we prove Theorem \ref{main}.

For each $N\in I$, and for each $\alpha\in{\frak b}$ with
$(\alpha,W{\frak b})={\frak b}$, set
$$A_{N,\alpha}=\{x\in{\frak b}\cap B_N\mid (Wx+\alpha){\frak b}\text{ is prime }\}.$$
By Theorem \ref{rs}, Theorem \ref{main} follows from the following
theorem.
\begin{theorem}For each $N\in I$, there is a number $\alpha_N\in(WG)\cap{\frak b}$
 with $(\alpha_N,W{\frak
b})={\frak b}$ such that the system $|\{A_{N,\alpha_N}\cap
B_{\varepsilon N}\}$ has positive upper density relative to
$\{\nu_N\}$.\end{theorem}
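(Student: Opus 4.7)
The plan is to estimate the numerator and denominator of the density ratio separately. For the denominator, I would invoke the $k$-cross-correlation condition (Theorem \ref{crosscorrelation}) in the trivial case $s=m=1$, $\psi_1(x)=x$, $b_1=0$; after sandwiching the ball $B_N$ between two parallelotopes of comparable size to satisfy the hypothesis literally, this yields
$$\sum_{x\in{\frak b}\cap B_N}\nu_N(x)=(1+o(1))|{\frak b}\cap B_N|\gg N^{[K:{\mathbb Q}]},$$
and the estimate is independent of the particular choice of $\alpha_N$.

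For the numerator, the key observation is that on $A_{N,\alpha_N}$ the ideal ${\frak p}:=(Wx+\alpha_N){\frak b}^{-1}$ is prime, so as soon as ${\rm N}{\frak p}\geq R$ the defining sum for $\Lambda_{K,R}({\frak p})$ collapses to the ${\frak d}=(1)$ term. Hence $\Lambda_{K,R}({\frak p})=1$ and $\nu_N(x)=C$ with $C:=\phi_K(W)\log R\cdot{\rm Res}_{z=1}\zeta_K(z)/(c_{\varphi}W^{[K:{\mathbb Q}]})$. The $x\in A_{N,\alpha_N}\cap B_{\varepsilon N}$ for which $(Wx+\alpha_N){\frak b}^{-1}$ has norm below $R$ form a negligible lower-dimensional set and are safely discarded, so the numerator is bounded below by $C$ times the count of the remaining large-norm $x$.

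To produce the $\alpha_N$, I would average over the $\phi_K(W)$ admissible residues $\alpha\in(WG)\cap{\frak b}$ with $(\alpha,W{\frak b})={\frak b}$. Under the change of variable $\gamma:=Wx+\alpha$ the quantity $\sum_{\alpha}|A_{N,\alpha}\cap B_{\varepsilon N}|$ becomes the number of prime ideals of the form $\gamma{\frak b}^{-1}$ with $\gamma\in{\frak b}$ coprime to $W{\frak b}$ and $\|\gamma\|_{\rm Min}\leq W\varepsilon N+O(W)$. By Landau's prime ideal theorem for $K$ applied in such a box and restricted to residues coprime to $W$, this total is of order $W^{[K:{\mathbb Q}]}N^{[K:{\mathbb Q}]}/\log N$; dividing by $\phi_K(W)$ and applying the pigeonhole principle delivers an $\alpha_N$ satisfying
$$|A_{N,\alpha_N}\cap B_{\varepsilon N}|\gg\frac{W^{[K:{\mathbb Q}]}N^{[K:{\mathbb Q}]}}{\phi_K(W)\log N}.$$
Multiplying through by $C$ one obtains a numerator of order $N^{[K:{\mathbb Q}]}$, which is comparable to the denominator, so the ratio stays bounded away from zero as $N\to\infty$.

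The main technical hurdle is the uniformity of Landau's prime ideal theorem in the modulus $W{\frak b}$. Since every prime factor of $W$ is below $w=\log\log N$, the modulus is of size $(\log N)^{O(1)}$, well within the Siegel--Walfisz range; the required estimate therefore reduces to a standard application of the zero-free region for Hecke $L$-functions attached to narrow ray class characters modulo $W{\frak b}$. All remaining steps (the sandwiching of $B_N$ by parallelotopes, the discarding of small-norm contributions, and the boundary-effect bookkeeping in the box for $\gamma$) are routine and parallel the corresponding steps in Green--Tao.
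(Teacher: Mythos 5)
Your proposal is essentially the paper's argument: average over the admissible residues $\alpha$, reduce to a count of prime ideals of norm $\asymp (NW\varepsilon)^{[K:{\mathbb Q}]}$ in the class $[{\frak b}^{-1}]$ (which the paper obtains via a Minkowski-type lemma providing, for each such prime, a generator of controlled archimedean size so that it lies in the box), and then pigeonhole over the $\phi_K(W)$ choices of $\alpha$. Two minor expository slips, neither a real gap: (i) the Siegel--Walfisz / Hecke $L$-function appeal you flag as the main hurdle is unnecessary, since after summing over all $\alpha$ you only need the raw prime ideal count in the fixed class $[{\frak b}^{-1}]$ with coprimality to $W$ excluding just the finitely many primes lying above rational primes $\leq w=\log\log N$; and (ii) the small-norm primes you discard are not ``lower-dimensional''---they are simply of negligible count $O(R^{[K:{\mathbb Q}]})=o(N^{[K:{\mathbb Q}]}/\log N)$ because $R$ is a fixed small power of $N$.
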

\proof Let $S_{K,\infty}$ the set of
infinite places of $K$. One can prove that there is a positive
constant $c_K$ such that every principal fractional ideal of $K$ has
a generator $\xi$ satisfying
\[|\sigma(\xi)|\leq c_K({\rm N}(\xi))^{1/[K:{\mathbb Q}]},\ \forall \sigma\in S_{K,\infty}.\]
It follows that, for each $N\in I$,
and for any prime ideal $\wp\in[{\frak b}^{-1}]$ satisfying
$(\wp,W)=1 $ and ${\rm N}\wp\leq c_K^{-1}{\rm N}{\frak b}^{-1}\cdot(
NW\varepsilon/2)^{[K:{\mathbb Q}]}$, there is a number
$\alpha\in{\frak b}\cap(WG)$ with $(\alpha,W{\frak b})={\frak b}$,
and a number $x\in A_{N,\alpha}\cap B_{\varepsilon N}$ such that
$\wp=(Wx+\alpha){\frak b}^{-1}$. So
$$\sum_{\stackrel{(\alpha,W{\frak b})={\frak
b}}{\alpha\in{\frak b}\cap(WG)}}\sum_{x\in A_{N,\alpha}\cap
B_{\varepsilon N}}\Lambda_{K,R}^2((Wx+\alpha){\frak
b}^{-1})$$$$\geq\sum_{\stackrel{\wp\in[{\frak
b}^{-1}],(\wp,W)=1}{c/2<{\rm N}\wp\cdot(NW)^{-[K:{\mathbb Q}]}\leq
c}}\Lambda_{K,R}^2(\wp)\gg(NW)^{[K:{\mathbb Q}]}/\log N ,$$ where
$c=c_K^{-1}{\rm N}{\frak b}^{-1}\cdot(\varepsilon/2)^{[K:{\mathbb
Q}]}$. The theorem now follows by the pigeonhole principle.
\endproof

\end{document}